\DeclareMathOperator{\dif}{\text{\normalfont d}}
\def\log{\mathrm{log}\,}
\theoremstyle{plain}
\newtheorem{thm}{Theorem}[section]
\newtheorem{obs}[thm]{{Observation}} 
\newtheorem{lem}[thm]{{Lemma}}
\theoremstyle{remark}
\newtheorem{rmk}[thm]{Remark}
\numberwithin{equation}{section}
\newtheorem{ques}[thm]{{\bf Question}}
\theoremstyle{plain}
\newcommand{\thistheoremname}{}
\newtheorem*{genericthm*}{\thistheoremname}
\newenvironment{namedthm*}[1]{\renewcommand{\thistheoremname}{#1}%
\begin{genericthm*}}
{\end{genericthm*}}
\newtheoremstyle{named}{}{}{\itshape}{}{\bfseries}{.}{.5em}{\thmnote{#3's }#1}
\theoremstyle{named}
\newcommand\thankssymb[1]{\textsuperscript{\@fnsymbol{#1}}}
\begin{document} 
\title{
	Universal holomorphic maps with slow growth
\\
I. An Algorithm
}

\subjclass[2010]{32A22, 30D35, 47A16, 41A20, 32H30.}
\keywords{Universality, holomorphic maps, slow growth,  Oka manifolds, Nevanlinna theory, hypercyclicity}

\author{Bin Guo}

\address{Academy of Mathematics and Systems Sciences, Chinese Academy of Sciences, Beijing 100190, China; School of Mathematical Sciences, University of Chinese Academy of Sciences, Beijing
100049, China}
\email{guobin181@mails.ucas.ac.cn}

\author{Song-Yan Xie }
\address{Academy of Mathematics and System Science \& Hua Loo-Keng Key Laboratory
	of Mathematics, Chinese Academy of Sciences, Beijing 100190, China; 
	School of Mathematical Sciences, University of Chinese Academy of Sciences, Beijing
100049, China}
\email{xiesongyan@amss.ac.cn}

\date{\today}


\begin{abstract}
	We design an {\sl Algorithm} to  fabricate universal holomorphic    maps  between any two complex Euclidean spaces, within preassigned transcendental growth rate. 
	As by-products,  universal holomorphic maps  from $\mathbb{C}^n$ to $\mathbb{CP}^m$ ($n\leqslant m$) and to complex tori having slow growth are obtained. We take inspiration  from Oka manifolds theory, Nevanlinna theory, and hypercyclic operators theory.
\end{abstract}

\maketitle

\hfill
{\em dedicated to the memory of Nessim Sibony}

\bigskip

\section{\bf Introduction}
About one century ago, in the  space $\mathcal{H}(\mathbb{C})$ 
of entire holomorphic functions, 
 Birkhoff~\cite{Birkhoff1929}
constructed surprising examples 
$F: 
\mathbb{C}\rightarrow \mathbb{C}$ 
with the  {\sl universal} property that every  $H
\in \mathcal{H}(\mathbb{C})$ can be reached as a 
limit by translations of $F$, precisely,
for certain sequence $\{a_{H, i}\}_{i\geqslant 1}$
of complex numbers there holds
\begin{equation}
	\label{Birkhoff example}
	\lim_{i\rightarrow \infty}
	F(z+a_{H, i})=H(z)
	\qquad
	{\scriptstyle (\forall\, z\,\in\, \mathbb{C})}.
\end{equation}
 The original construction often appears in exercise of complex analysis (e.g.~\cite[pp.~69--70, ex.~5]{Stein-book-complex-analysis}). 

For source spaces with certain symmetry, there might be also  ``universal'' holomorphic maps into a certain class of complex manifolds. 
In hindsight,
the  existence of such  maps
is intimately related to Oka theory~\cite{Forstneric-Oka-book,  MR4547869}, which originated (1939) in Kiyoshi Oka's celebrated articles on Cousin Problems I and II~\cite{MR3309925}. Major 
developments have been made by Hans Grauert~\cite{MR98199}, 
 Mikhail Gromov~\cite{MR1001851} 
 and Franc Forstneri\v{c}~\cite{MR2199229, MR2554568} (cf.~\cite{MR4547869} for detailed account). 
  The simplest, among a dozen, characterization of Oka manifolds is given by
Runge's approximation property~\cite{MR2199229}.

\bigskip\noindent {\bf Definition.} A complex manifold $Y$ is an Oka manifold if every holomorphic map 
from a neighbourhood of a compact convex set $K$ in a Euclidean space $\mathbb{C}^n$ (for any $n\in \mathbb{Z}_+$) to
$Y$ is a uniform limit on $K$ of entire maps $\mathbb{C}^n \rightarrow Y$.

\bigskip

Basic examples of Oka manifolds include  $\mathbb{C}^m$,  $\mathbb{CP}^m$, and complex tori $\mathbb{T}^m$. A result of Kusakabe~\cite[Theorem 1.4]{Kusababe-first-paper} shows that, given an Oka manifold $Y$, for any source space $X$ with mild symmetry,  there are  plenty of
``universal'' holomorphic maps from $X$ to $Y$. 

One  philosophy 
in algebraic geometry is that  key  information of a variety can be read off from  moduli spaces of certain curves or subvarieties it contains. 
Similarly, for seeking quantitative invariants of a given Oka manifold $Y$, we shall likewise study 
 the spaces $\mathsf{Hol}(X, Y)$ of holomorphic maps from various open source spaces $X$ to $Y$,
and
certain subspaces $\mathscr{U}(X, Y)$  consisting of ``universal'' holomorphic maps
are particularly interesting.

There is a natural {\sl Nevanlinna functional} on  the space $\mathsf{Hol}(X, Y)$, which associates an element $F: X\rightarrow Y$ with its (certain) Nevanlinna characteristic function $T_F(\bullet): \mathbb{R}_{\geqslant 1}\rightarrow \mathbb{R}_{\geqslant 0}$, capturing the asymptotic growth rate of $F$.  An interesting problem is thus studying   ``possible minimum'' of  the Nevanlinna functional on subspaces  $\mathscr{U}(X, Y)\subset \mathsf{Hol}(X, Y)$ for  Oka manifolds $Y$ and various  source spaces $X$.  One  potential application is about classification of Oka manifolds.

We start with the fundamental case where  $X=\mathbb{C}^n$ and $Y=\mathbb{C}^m,  \mathbb{CP}^m$, or $\mathbb{T}^m$, having certain linear structure. 
One motivation is a challenge~\cite[Problem 9.1]{Dinh-Sibony-list} raised by Dinh and Sibony,
about determining  minimal possible growth $T_f(r)$ of
universal entire holomorphic / meromorphic functions  $f:\mathbb{C} \rightarrow \mathbb{CP}^1$, in terms of the Nevanlinna-Shimizu-Ahlfors  characteristic function
\[
T_f(r)
:=
\int_{1}^r
\frac{\dif t}{t}
\int_{\mathbb{D}_t}
f^*\omega\qquad
{\scriptstyle
			(\forall\, r\,\geqslant\, 1)
		},
\]
where $\omega$ is the Fubini-Study form on $\mathbb{CP}^1$ giving unit area, and where $\mathbb{D}_t:=\{z\in \mathbb{C}:|z|<t\}$.

An easy observation is that, a universal holomorphic / meromorphic function $f$ must have infinite area
\[
\int_{\mathbb{D}_t} f^*\omega
\rightarrow
\infty,
\quad
\text{as }
t\rightarrow \infty,
\]
for otherwise by complex analysis~\cite[p.~10, Theorem 1.1.26]{Noguchi-Winkelmann-book}, 
$f$ can be extended holomorphically across the infinity  $\{\infty\}= \mathbb{CP}^1 \setminus \mathbb{C}$ and hence $f: \mathbb{CP}^1 \rightarrow  \mathbb{CP}^1$ is a rational function, 
contradicting the presumed universality.

Thus the growth $T_f$ of a universal holomorphic / meromorphic function $f$ must be sufficiently fast
\[
\liminf_{r\rightarrow +\infty}\,
T_f(r)/\log r
\geqslant
\lim_{r\rightarrow +\infty}
\frac{1}{\log r}
\int_{r_0}^r
\frac{\dif t}{t}
\int_{\mathbb{D}_{r_0}}
f^*\omega
=
\int_{\mathbb{D}_{r_0}}
f^*\omega,
\]
where $\int_{\mathbb{D}_{r_0}}
f^*\omega$ tends to
infinity 
as
$r_0\rightarrow +\infty$.

On the other hand,
given any positive continuous nondecreasing function $\psi: \mathbb{R}_{\geqslant 1}\rightarrow \mathbb{R}_+$ tending to infinity
$
\lim_{r\rightarrow +\infty}
\psi(r)
=
+\infty$, 
it is shown in~\cite{Xie-Chen-Tuan}  that 
\begin{thm}
	\label{chen-hunh-xie thm}
	There exists some
	universal meromorphic function $f: \mathbb{C}\rightarrow \mathbb{CP}^1$ with  slow growth
	\begin{equation}
		\label{admissible-slow-growth}
		T_f(r)
		\leqslant
		\psi(r)
		\cdot
		\log r
		\qquad
		{\scriptstyle
			(\forall\, r\geqslant\, 1).
		}
	\end{equation}
\end{thm}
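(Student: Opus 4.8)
\emph{Overall plan.} The plan is to manufacture $f$ by a Birkhoff-style iteration. I would fix once and for all a countable family $\{g_j\}_{j\geqslant1}$ of rational functions with coefficients in $\mathbb{Q}(i)$, dense in $\mathsf{Hol}(\mathbb{C},\mathbb{CP}^1)$ for locally uniform convergence with respect to the spherical distance $\chi$, and a surjection $\sigma\colon\mathbb{Z}_+\to\mathbb{Z}_+$ each of whose fibres is infinite. Then I would construct $f=\lim_m f_m$, where $f_0\equiv0$ and $f_m:=f_{m-1}+\gamma_m$ for rational ``bumps'' $\gamma_m$, together with radii $R_m\uparrow\infty$ and translation vectors $a_m$ of enormous modulus, arranging that $\sup_{|z|\leqslant m}\chi\bigl(f(z+a_m),g_{\sigma(m)}(z)\bigr)\to0$. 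As usual, this last property forces universality: any $H\in\mathsf{Hol}(\mathbb{C},\mathbb{CP}^1)$ is reached as a locally uniform limit of translates of $f$ by first approximating $H$ on a large disk by some $g_j$ and then taking an index $m$ in the infinite fibre $\sigma^{-1}(j)$ with $m$ large.

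\emph{The bumps.} At stage $m$, having $\gamma_1,\dots,\gamma_{m-1}$ and $R_{m-1}$ with $\mathbb{D}_{R_{m-1}}$ containing the first $m-1$ planted copies, I would pick $a_m$ with $|a_m|>2R_{m-1}+2|a_{m-1}|+4m$ and take $\gamma_m$ to be the rational function, vanishing at $\infty$ and with all poles inside $a_m+\mathbb{D}_{2m}$, obtained by adding to the $a_m$-translated principal parts of $g_{\sigma(m)}$ at its poles inside $\mathbb{D}_m$ a Cauchy/Runge superposition of simple fractions (poles clustered near $a_m$) that approximates the remaining holomorphic part of $z\mapsto g_{\sigma(m)}(z-a_m)-f_{m-1}(z)$ on $a_m+\mathbb{D}_m$ to within a small $\varepsilon'_m>0$; note $\deg\gamma_m$ is then pinned down by $g_{\sigma(m)}$, $m$ and $\varepsilon'_m$, independently of $a_m$. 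Two elementary facts would drive the estimates. First, since $\gamma_m(\infty)=0$ and $\gamma_m$ is holomorphic on $\{|z-a_m|\geqslant|a_m|/2\}$, the maximum principle upgrades a single-circle bound $|\gamma_m|\leqslant\varepsilon'_m$ on $\{|z-a_m|=|a_m|/2\}$ to that bound on the whole region $\{|z-a_m|\geqslant|a_m|/2\}$, which by the choice of $|a_m|$ contains $\mathbb{D}_{R_{m-1}}$, every earlier copy-disk, and $\{|z|\geqslant\tfrac32|a_m|\}$. Second, the principal parts and the Cauchy sum have size $O_{g_{\sigma(m)},m,\varepsilon'_m}\!\bigl(1/|a_m|\bigr)$ on the circle $\{|z-a_m|=|a_m|/2\}$, so that single-circle bound holds automatically once $|a_m|$ is large enough. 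Choosing $\varepsilon'_m:=2^{-m}/\log|a_m|$ keeps $\sum_m\varepsilon'_m<\infty$, whence $f=\lim f_m$ exists and is meromorphic (Hurwitz), $\chi(f,f_{m-1})\leqslant\sum_{l\geqslant m}\varepsilon'_l$ on $\mathbb{D}_{R_{m-1}}$, and (using also that $\gamma_l$ is tiny on $a_m+\mathbb{D}_m$ for $l>m$) $\sup_{|z|\leqslant m}\chi\bigl(f(z+a_m),g_{\sigma(m)}(z)\bigr)\to0$. Put $R_m:=|a_m|+m$.

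\emph{The growth bound.} Since $\deg\gamma_m$ is already pinned down, I would additionally require, when choosing $a_m$, that $\psi\bigl(\tfrac12|a_m|\bigr)\geqslant 10\sum_{l\leqslant m}(\deg\gamma_l+1)$ --- possible because $\psi$ is nondecreasing with $\psi\to\infty$. Applying the Jensen formula to $\log\sqrt{1+|f_m|^2}-\log\sqrt{1+|f_{m-1}|^2}$ and using $|\gamma_m|\leqslant\varepsilon'_m$ on $\mathbb{D}_{|a_m|/2}$ gives $\bigl|\int_{\mathbb{D}_t}(f_m^*\omega-f_{m-1}^*\omega)\bigr|=O(\varepsilon'_m)$ for $t<|a_m|/2$, whence $|T_f(r)-T_{f_m}(r)|\leqslant(\log r)\sum_{l>m}O(\varepsilon'_l)\to0$ for each fixed $r$; so it is enough to bound each $T_{f_m}$. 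Telescoping and using the same Jensen identity together with Nevanlinna's first main theorem for $\gamma_m$, one gets, for $r\geqslant|a_m|$,
\[
\begin{aligned}
T_{f_m}(r)-T_{f_{m-1}}(r)
&\ \leqslant\ \tfrac1{2\pi}\!\int_0^{2\pi}\!\log\sqrt{1+|\gamma_m(re^{i\theta})|^2}\,\dif\theta\ +\ (\deg\gamma_m)\log r\ +\ O(\varepsilon'_m)\\
&\ \leqslant\ 2\,(\deg\gamma_m)\log r+O(1),
\end{aligned}
\]
(the first summand is the proximity of $\gamma_m$ at $\infty$, the second counts the poles of $\gamma_m$ inside $\mathbb{D}_r$), while the left side is only $O(\varepsilon'_m)$ for $r<|a_m|/2$. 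Summing over $m$ --- using $\sum_m\varepsilon'_m<\infty$, that at most $\tfrac1{10}\psi(r)$ copies lie inside radius $2r$, and that $\sum_{l\leqslant m}\deg\gamma_l\leqslant\tfrac1{10}\psi\bigl(\tfrac12|a_m|\bigr)$ --- one obtains $T_f(r)\leqslant\psi(r)\log r$ for every $r\geqslant1$ (for small $r$ the planted copies sit at astronomically large radii, so $T_f(r)=0$).

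\emph{Where the difficulty lies.} All the ingredients above are elementary --- clustered-pole rational approximation, the maximum principle, the first main theorem --- and the skeleton is a textbook Birkhoff construction; the real work, and the only genuine obstacle I foresee, is the quantitative bookkeeping. One must fabricate a single bump $\gamma_m$ that is simultaneously (a) an honest local approximation of the translated $g_{\sigma(m)}$ on $a_m+\mathbb{D}_m$, (b) uniformly minuscule outside a ball of radius $|a_m|/2$, and (c) of degree not depending on where it is placed, and then thread the resulting chain of estimates so that the \emph{accumulated} Nevanlinna characteristic stays below $\psi(r)\log r$ for \emph{all} $r\geqslant1$ while the vectors $a_m$ are allowed to escape to infinity as fast as $\psi$ demands. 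Organising this into a self-contained, checkable procedure is exactly what the \emph{Algorithm} is meant to do.
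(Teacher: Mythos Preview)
Your proposal follows essentially the same route as the paper's sketch in Subsection~\ref{expain CHX}: write $f$ as a sum of translated rational bumps vanishing at $\infty$, split each target into its principal-parts piece plus a holomorphic remainder approximated by simple fractions via Runge, and push the centers $a_m$ out so fast that the pole-counting function $N_f(r,\infty)$---and hence $T_f(r)$---stays below $\psi(r)\log r$. One small organizational wrinkle worth tidying: setting $\varepsilon'_m=2^{-m}/\log|a_m|$ makes $\deg\gamma_m$ depend on $|a_m|$, which feeds back into your constraint $\psi(\tfrac12|a_m|)\geqslant 10\sum_{l\leqslant m}(\deg\gamma_l+1)$ and can fail for very slowly growing $\psi$; simply take $\varepsilon'_m=2^{-m}$ instead, since the extra $\log r$ factor in your telescoping bound is not needed---Cartan's formula on the circle $|z|=r$ gives $|T_{f_l}(r)-T_{f_{l-1}}(r)|=O(\varepsilon'_l)$ directly for $r<|a_l|/2$, the pole-counting functions agreeing there.
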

    
This result  is an optimal answer to the meromorphic part of~\cite[Problem 9.1]{Dinh-Sibony-list}. Indeed, given any universal meromorphic function $f$,  one can construct an auxiliary positive continuous nondecreasing function 
\[
\hat{\psi}(t)
=
\inf_{r> t}\,
\{T_f(r)/\log r\}
\,
:\,
\mathbb{R}_{\geqslant 1}
\longrightarrow
\mathbb{R}_{+}
\]
such that an opposite  growth rate estimate holds automatically 
\[
T_f(r)
\geqslant
\hat{\psi}(r)
\cdot
\log r
\qquad
{\scriptstyle
	(\forall\, r\, \geqslant\, 1),
}
\]
where
$
\hat{\psi}(r)
$
tends to 
infinity as $r\rightarrow \infty$.

However, for the holomorphic part of~\cite[Problem 9.1]{Dinh-Sibony-list},
aiming at the same slow growth rate~\eqref{admissible-slow-growth}, 
the authors had encountered much more difficulties. Generally, for constructing holomorphic functions, the  most successful method shall be about solving $\overline{\partial}$-equations, by either (1)  explicit general Cauchy integral formula~\cite[p.~3, Theorem 1.2.1]{Hormander-SCV-book},
or by (2)  H\"{o}rmander's $L^2$ estimates~\cite[p.~94, Theorem 4.4.2]{Hormander-SCV-book}. Both machinery,
together with several plausible ideas, failed for the authors. 
As a matter of fact, there was also psychological obstacle 
about wondering whether such universal entire functions having slow growth actually exist,
and our faith in {\em Oka principle}~\cite[pp.~368--369]{MR4547869} was in the test.
	
Nevertheless,  a surprise came to the authors, shortly after they successfully designed an {\em Algorithm} producing the desired universal entire functions,  that such result was already sketched in~\cite{DRuis-paper-1}. 

Unlike the ignorance of~\cite{DRuis-paper-1} in the community of complex analysis,  in  hypercyclic operators theory, Du\u{\i}os Ruis’ result is widely known (see~\cite[pp.~109-110, Theorem~4.23]{book-linear-chaos}). Here we rephrase

\begin{thm}[Du\u{\i}os Ruis]
	\label{thm-Duios-Ruis}
	Let $\phi: \,\mathbb{R}_{\geqslant 0}\, \rightarrow\, \mathbb{R}_{> 0}$ be a continuous function growing faster than any polynomial
	\[
	\lim_{r\rightarrow \infty}
	\phi(r)/r^N
	=\infty
	\qquad
	{\scriptstyle
		(\forall\, N\, \geqslant\, 1).
	}
	\] 
	Let $a$ be a nonzero complex number.
	Then there exists an entire function $f$ 
	hypercyclic for $\mathsf{T}_a$ with slow growth
	\begin{equation}
		\label{DR bound}
		|f(z)|\leqslant
		\phi(|z|)
		\qquad
		{\scriptstyle
			(\forall\,z\,\in\, \mathbb{C}).
		}
	\end{equation}
\end{thm}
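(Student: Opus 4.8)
The plan is to adapt Birkhoff's classical universality construction, building $f$ as a locally uniformly convergent series of polynomial blocks
\[
f\ =\ \sum_{j\geqslant 1}Q_j,\qquad S_j:=Q_1+\cdots+Q_j ,
\]
with each $Q_j$ negligible on a disc $\Delta(0,R_{j-1}):=\{|z|\leqslant R_{j-1}\}$ whose radius tends to infinity, while $Q_j(z+m_ja)$ reproduces a prescribed polynomial on $\Delta(0,j)$ for a large integer $m_j$; here $\mathsf{T}_a g=g(\cdot+a)$, so $\mathsf{T}_a^{\,n}g=g(\cdot+na)$ and hypercyclicity means density of $\{\mathsf{T}_a^{\,n}f:n\geqslant 0\}$ in $\mathcal H(\mathbb C)$. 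Two harmless reductions: replacing $\phi$ by $r\mapsto\inf_{s\geqslant r}\phi(s)$ we may assume $\phi$ is nondecreasing (it stays positive and faster than every polynomial); and we fix in advance an enumeration $P_1,P_2,\dots$ in which every polynomial with coefficients in $\mathbb Q+i\mathbb Q$ — a dense subset of $\mathcal H(\mathbb C)$ — occurs infinitely often. Recursively we shall choose radii $0<R_0<R_1<\cdots\to\infty$, integers $1\leqslant m_1<m_2<\cdots$, and polynomials $Q_j$, so that: (i) $\|Q_j\|_{\Delta(0,R_{j-1})}\leqslant 2^{-j}$; (ii) $R_j\geqslant|m_ja|+j$, whence every later block is $\leqslant 2^{-i}$ on $\Delta(m_ja,j)$; and (iii) $|Q_j(z+m_ja)-P_j(z)+S_{j-1}(z+m_ja)|\leqslant 2^{-j}$ for all $z\in\Delta(0,j)$, the purpose of the correction $S_{j-1}$ being to cancel the contribution already committed near $m_ja$.

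The mechanism for (i)+(iii) together with growth control is to set $Q_j:=M_j\cdot g_j$ with $g_j(z):=P_j(z-m_ja)-S_{j-1}(z)$ and the explicit polynomial multiplier
\[
M_j(z)\ :=\ 1-\Bigl(1-\bigl(z/(m_ja)\bigr)^{d_j+1}\Bigr)^{d_j+1},\qquad d_j:=\max\{\deg P_j,\ \deg S_{j-1}\}.
\]
It vanishes to order $d_j+1$ at $0$ and differs from $1$ to order $d_j+1$ at $m_ja$, so that for $m_j$ large
\[
\|M_j\|_{\Delta(0,R_{j-1})}\ \lesssim\ \bigl(R_{j-1}/(m_j|a|)\bigr)^{d_j+1},\qquad
\|M_j-1\|_{\Delta(m_ja,j)}\ \lesssim\ \bigl(j/(m_j|a|)\bigr)^{d_j+1},
\]
while $\|g_j\|$ on either disc is $O(m_j^{d_j})$; since the exponents beat $d_j$, both (i) and (iii) hold once $m_j$ exceeds a threshold $\Theta_j$ that is polynomial in $R_{j-1}$ with constants built from $P_j$, $S_{j-1}$, $j$ and $a$ — data already fixed before stage $j$. (Alternatively $Q_j$ could be produced by Runge's theorem on the polynomially convex pair $\Delta(0,R_{j-1})\sqcup\Delta(m_ja,j)$, but then a quantitative version is needed to bound its degree.) Two structural features make the bookkeeping work: first, $\deg Q_j=(d_j+1)^2+\deg g_j$ and — after Cauchy estimates — a constant $C_j$ with $\|Q_j\|_{\Delta(0,\rho)}\leqslant C_j\,\rho^{\deg Q_j}/m_j$ for all $\rho\geqslant R_{j-1}$ are \emph{both computable already at the previous stage}, because the enumeration prescribes $\deg P_j$ and $\deg S_{j-1}$; second, the block degrees are strictly increasing, so $\deg Q_j\geqslant\deg Q_i$ for $i<j$.

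Granting the recursion, the verification is routine. \emph{$f$ is entire:} on $\Delta(0,r)$ all but finitely many $Q_j$ (those with $R_{j-1}\geqslant r$) are $\leqslant 2^{-j}$ by (i), so the series converges locally uniformly. \emph{$f$ is hypercyclic for $\mathsf{T}_a$:} given $h\in\mathcal H(\mathbb C)$, a compact $K$, and $\varepsilon>0$, pick $P_j$ with $\|P_j-h\|_K<\varepsilon/2$ and $j$ so large that $K\subset\Delta(0,j)$ and $2^{-j+2}<\varepsilon/2$; on $\Delta(m_ja,j)$ one has $f=S_{j-1}+g_j+(\text{error}<2^{-j})+(\text{tail}<2^{-j})=P_j(\cdot-m_ja)+(\text{size}<2^{-j+1})$ by (ii) and (iii), hence $\|\mathsf{T}_a^{\,m_j}f-h\|_K<\varepsilon$. \emph{Slow growth:} for $R_{j-1}\leqslant|z|<R_j$, combining $\|Q_i\|_{\Delta(0,|z|)}\leqslant C_i|z|^{\deg Q_i}/m_i$ for $i\leqslant j$ with $\sum_{i>j}2^{-i}\leqslant 1$ gives $|f(z)|\leqslant 1+\bigl(\sum_{i\leqslant j}C_i/m_i\bigr)|z|^{\deg Q_j}$; since $R_j$ is itself bounded by a polynomial in $R_{j-1}$ with constants computable at stage $j-1$, one can — this is where the recursion is steered — choose $R_{j-1}$ large enough that $\phi(R_{j-1})$ dominates $1+\bigl(\sum_{i\leqslant j}C_i/m_i\bigr)R_j^{\deg Q_j}$, using that $\phi$ outgrows every polynomial; monotonicity of $\phi$ then yields $|f(z)|\leqslant\phi(|z|)$ for $|z|\geqslant R_0$, and a final rescaling $f\mapsto cf$ (still hypercyclic for any $c\neq 0$) handles the bounded disc $\Delta(0,R_0)$.

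The one genuine obstacle is precisely this tension: faithful imitation of the target on $\Delta(0,j)$ forces $m_j$ far out, and a distant $m_j$ is exactly what risks pushing $f$ above $\phi$. It is resolved by the flat multiplier $M_j$, which keeps both $\deg Q_j$ and the growth the $j$-th block can contribute bounded by data frozen before stage $j$, together with the super-polynomial margin in $\phi$, spent by reserving — one stage ahead — a radius $R_{j-1}$ so large that $\phi(R_{j-1})$ overwhelms the merely polynomial size that $Q_j$ can attain on $\Delta(0,R_j)$. Once this accounting is in place, every remaining estimate is elementary.
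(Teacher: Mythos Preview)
Your argument is correct, and the overall skeleton---building $f$ as a series of polynomial blocks, each negligible on a large disc around the origin while reproducing a prescribed target near a distant translate---matches the paper's. The implementations, however, differ in two substantive ways.

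First, to produce the block $Q_j$ you use the explicit multiplier $M_j(z)=1-\bigl(1-(z/(m_ja))^{d_j+1}\bigr)^{d_j+1}$, a device in the spirit of Chan--Shapiro. The paper instead writes the $k$-th block as $S_k=\sum_{j=m_k}^{M_k}a_jz^j$ with $m_k=\ell_k+1$, $M_k=2\ell_k+1$, and determines the $a_j$ by solving the square linear system \[\sum_{j=m_k}^{M_k}\binom{j}{i}c_k^{j-i}a_j=-\hat a_{k,i}\qquad(0\leqslant i\leqslant\ell_k),\] whose determinant is computed to be exactly $c_k^{(\ell_k+1)^2}$; Cramer's rule then shows each $a_j\in c_k^{-1}\mathbb{C}[c_k^{-1}]$, so the coefficients shrink as $|c_k|\to\infty$.

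Second, and more importantly, the growth control is organized differently. The paper first invokes Lemma~\ref{comparison-lemma} to dominate $\phi$ from below by an analytic minorant $\sum A_ir^i$ with $A_{i+1}/A_i\to 0$, and then arranges each coefficient of $S_k$ to satisfy $|a_j|\leqslant A_j$; the bound $|f(z)|\leqslant\phi(|z|)$ is then a one-line calculation. Your route is more delicate: you work directly with $\phi$, bound the partial sums by a polynomial $1+L_j|z|^{\deg Q_j}$ on each annulus $[R_{j-1},R_j)$, and steer the recursion so that $R_{j-1}$ lies beyond the threshold where $\phi$ overtakes this polynomial. This works because the relevant constants ($L_j$, $\deg Q_j$) depend only on $P_j$ and the \emph{coefficients} of $S_{j-1}$---data fixed before $R_{j-1}$ is chosen---and the condition $\phi(\rho)\geqslant 1+L_j\rho^{\deg Q_j}$ for $\rho\geqslant R_{j-1}$ does not actually involve $R_j$ (your phrasing introduces $R_j$ unnecessarily; the cleaner form sidesteps the apparent circularity).

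What each approach buys: the paper's coefficient-wise control is tidier and---via the change of coordinates to $w_1,\dots,w_n$---is what makes the \emph{Algorithm} extend verbatim to $\mathbb{C}^n\to\mathbb{C}^m$ (Theorem~A). Your multiplier $M_j$, depending on a single linear form, is constant along hyperplanes and would not immediately give the growth bound in several variables. On the other hand, your argument avoids the comparison lemma and the linear algebra entirely, and is arguably more elementary for the one-variable statement at hand.
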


Let us explain the notation and terminology.
The space $\mathcal{H}(\mathbb{C})$ of entire holomorphic functions has a natural topology given by uniform convergence on compact sets. For a nonzero complex number $a$, the translation operator $\mathsf{T}_a:
\mathcal{H}(\mathbb{C})
\rightarrow
\mathcal{H}(\mathbb{C})$
is defined by
$
f(\bullet)
\mapsto
f(\bullet+a)
$. An element
$f\in \mathcal{H}(\mathbb{C})$ is said to be {\sl hypercyclic} for $\mathsf{T}_a$ if
the orbit
$\{f, \mathsf{T}_a^{(1)}(f),  \mathsf{T}_a^{(2)}(f), \dots\}$ 
is dense in $\mathcal{H}(\mathbb{C})$.

In fact, the original statement in~\cite{DRuis-paper-1}
only concerns  slow growth real analytic functions $\phi$ having positive  coefficients 
shrinking to zero rapidly
\begin{equation}
	\label{admissible analytic function}
	\phi(r)=
	\sum_{i=0}^{\infty}
	A_i\,r^i,
	\qquad
	A_i>0
	\quad
	{\scriptstyle (\forall\, i\,\geqslant\, 0)},
	\qquad
	\lim_{i\rightarrow \infty}
	A_{i+1}/A_i
	=
	0.
\end{equation}
The two statements are indeed equivalent,
see Lemma~\ref{comparison-lemma} below. Hence the holomorphic part of~\cite[Problem 9.1]{Dinh-Sibony-list}  
has a clear answer of the shape~\eqref{admissible-slow-growth} after Du\u{\i}os Ruis' Theorem~\ref{thm-Duios-Ruis}, by using H. Cartan's equivalent version of Nevanlinna's characteristic function 
\begin{equation}
	\label{Cartan order function}
	T_f(r)
	=
	\frac{1}{2\pi}
	\int_{0}^{2\pi}
	\log^+|f(re^{i\theta})|\dif\theta+
	O(1),
\end{equation}
where $O(1)$ stands for some uniformly bounded term~\cite[p.~9]{Noguchi-Winkelmann-book}.

In retrospect, outside complex analysis, Birkhoff's example of universal entire functions 
was phenomenal. It
foreshadowed a new branch of functional analysis / dynamical systems about linear chaos~\cite{MR1685272, book-bayart-matheron, book-linear-chaos}, according to which an infinite dimensional linear system may exhibit  nonlinear and unpredictable patterns.
The typical
example is 

\begin{thm}[Birkhoff~\cite{Birkhoff1929}]\label{Birkhoff 1929 theorem}
    For $a\in \mathbb{C}\setminus \{0\}$, 
    some 
$f\in \mathcal{H}(\mathbb{C})$ is hypercyclic for $\mathsf{T}_a$.
\end{thm}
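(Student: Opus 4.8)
The plan is to build $f$ directly by a Runge-type telescoping construction, which also serves as the skeleton for the quantitative refinements pursued later in the paper. Fix once and for all a countable dense subset $\{h_k\}_{k\geqslant 1}$ of $\mathcal{H}(\mathbb{C})$, for instance the polynomials with coefficients in $\mathbb{Q}+i\,\mathbb{Q}$. The goal is to produce $f$ as a locally uniformly convergent series $f=\sum_{k\geqslant 1}Q_k$ of polynomials, together with a strictly increasing sequence of positive integers $n_1<n_2<\cdots$, arranged so that for every $k$ one has $\sup_{|w|\leqslant k}\bigl|f(w+n_k a)-h_k(w)\bigr|<2^{-k+1}$; since $\{h_k\}$ is dense, this forces the orbit $\{\mathsf{T}_a^{n}f:n\geqslant 0\}$ to be dense, i.e.\ $f$ to be hypercyclic.

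First I would set up the induction. Suppose $Q_1,\dots,Q_{k-1}$ and $n_1,\dots,n_{k-1}$ have been chosen, and put
\[
E_k:=\{z:|z|\leqslant k\}\cup\bigcup_{j<k}\Delta_j,\qquad \Delta_j:=\{z:|z-n_j a|\leqslant j\}.
\]
Because $a\neq 0$ we may pick $n_k$ so large that $\Delta_k$ is disjoint from $E_k$; then $E_k\cup\Delta_k$ is a finite disjoint union of closed round disks, hence compact with connected complement. Define a function $g$, holomorphic on a neighbourhood of $E_k\cup\Delta_k$, by $g\equiv 0$ near $E_k$ and $g(z)=h_k(z-n_k a)-\sum_{j<k}Q_j(z)$ near $\Delta_k$. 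By Runge's polynomial approximation theorem there is a polynomial $Q_k$ with $|Q_k-g|<2^{-k}$ on $E_k\cup\Delta_k$, that is,
\[
|Q_k|<2^{-k}\ \text{on}\ E_k,\qquad \Bigl|Q_k(z)+\sum_{j<k}Q_j(z)-h_k(z-n_k a)\Bigr|<2^{-k}\ \text{on}\ \Delta_k.
\]

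Next I would check convergence and the orbit estimate. For any $R>0$ and any index $m>R$ one has $\{|z|\leqslant R\}\subseteq\{|z|\leqslant m\}\subseteq E_m$, so $|Q_m|<2^{-m}$ on $\{|z|\leqslant R\}$; hence $\sum_k Q_k$ converges uniformly on every disk and $f\in\mathcal{H}(\mathbb{C})$. Now fix $k$ and work on $\Delta_k$. Writing $f=\sum_{j\leqslant k}Q_j+\sum_{j>k}Q_j$, the partial sum $\sum_{j\leqslant k}Q_j$ is within $2^{-k}$ of $h_k(\,\cdot\,-n_k a)$ by the second displayed bound; and for each $j>k$ the index $k$ lies among those $<j$, so $\Delta_k\subseteq E_j$ and thus $|Q_j|<2^{-j}$ on $\Delta_k$, whence the tail is bounded by $2^{-k}$. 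Altogether $|f(z)-h_k(z-n_k a)|<2^{-k+1}$ on $\Delta_k$, which is the desired estimate after the substitution $z=w+n_k a$. Consequently each $h_k$ lies in the closure of the orbit of $f$ under $\mathsf{T}_a$, and density of $\{h_k\}$ finishes the proof.

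I do not expect a genuine obstacle: the sole analytic input is Runge's theorem applied to the disjoint-disks configuration, and all the rest is bookkeeping engineered so that the $2^{-k}$ errors telescope. One could equally invoke the Birkhoff transitivity theorem: $\mathcal{H}(\mathbb{C})$ is a separable Fr\'echet space, and the same Runge argument shows $\mathsf{T}_a$ is topologically transitive, hence possesses a dense $G_\delta$ of hypercyclic vectors. The only point worth flagging is that this construction is precisely the template onto which the growth bound $T_f(r)\leqslant\psi(r)\log r$ will later be grafted; there the added difficulty is to keep the degrees and sup-norms of the $Q_k$ under tight control while still meeting the approximation demands, which is exactly what the \emph{Algorithm} of this paper is designed to do.
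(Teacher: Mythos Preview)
Your proof is correct and follows essentially the same approach as the paper's: both build $f$ by iterated Runge approximation on a disjoint union of disks, adding at step $k$ a polynomial that is negligible on the previously handled region and matches the $k$-th target on a far-away translated disk centered on $\mathbb{Z}_+\cdot a$. The only cosmetic differences are that the paper writes $f=\lim F_i$ rather than $f=\sum Q_k$, encloses all prior disks in one large disk $\mathbb{D}_{R_i}$ instead of your union $E_k$, and arranges each target pair $(f_i,r_i)$ to recur infinitely often (the trick~\eqref{countable*countable=countable trick}) rather than relying, as you do, on the dense sequence hitting every basic open set for arbitrarily large $k$; these are equivalent bookkeeping choices.
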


\begin{ques}
	\label{natural-question}
	Within  transcendental growth rate $\phi$ of the shape~\eqref{admissible analytic function}, can we construct
	universal holomorphic functions $F\in\mathcal{H}(\mathbb{C}^n)$ 
	in several variables
	which are hypercyclic for a given $\mathsf{T}_a$ where $a\in \mathbb{C}^n\setminus \{\mathbf{0}\}$?
\end{ques}

Note that the approach of 
Du\u{\i}os Ruis~\cite{DRuis-paper-1} only works for the
source space $\mathbb{C}$, see the discussion in Subsection~\ref{subsection 2.1}. Our initial goal is thus to answer 
Question~\ref{natural-question} {\em in positive} by adapting our {\em Algorithm}. It turns out that we can prove much more. 

Here is a reminiscent of~\cite[Theorem 8]{amazing-theorem}.

\begin{lem}
	\label{all-in hypercyclic lemma}
	Let $X$ be an Oka manifold.  In the space $\mathsf{Hol}(\mathbb{C}^n, X)$, equipped with compact-open topology, 
	of holomorphic maps from $\mathbb{C}^n$ to $X$,
	if $f$ is a hypercyclic element
	for the translation operator $\mathsf{T}_a$ where $a\in \mathbb{C}^n\setminus \{\mathbf{0}\}$,
	then $f$ is hypercyclic simultaneously for all $\mathsf{T}_b$ where $b\in\mathbb{R}_+\cdot a$.
\end{lem}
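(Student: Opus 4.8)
The plan is to recognize the claim as an instance of the Conejero--M\"uller--Peris discretization phenomenon --- a hypercyclic vector of a jointly continuous one-parameter semigroup is hypercyclic for each of its time-$t$ maps, $t>0$ --- and to transcribe the classical argument into the present non-linear setting, in the spirit of \cite[Theorem~8]{amazing-theorem} (compare \cite[Theorem~4.23]{book-linear-chaos}). Write $Z:=\mathsf{Hol}(\mathbb{C}^n, X)$, with the compact-open topology; the case $\dim_{\mathbb{C}}X=0$ is trivial, so assume $\dim_{\mathbb{C}}X\geqslant 1$. Then $Z$ is a Polish space with no isolated points, it is nowhere locally compact, and it is path-connected: one may assume $X$ connected --- otherwise no hypercyclic $f$ exists, each holomorphic map and all its translates having image in a single component of $X$ --- and then every $f\in Z$ is joined to the constant map $z\mapsto f(0)$ by the path $t\mapsto f(t\,\cdot)$, $t\in[0,1]$, while the constant maps form a connected copy of $X$. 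The decisive structural fact is that $(\mathsf{T}_{ta})_{t\geqslant 0}$ is a one-parameter semigroup of homeomorphisms of $Z$ --- with $\mathsf{T}_{sa}\circ\mathsf{T}_{ta}=\mathsf{T}_{(s+t)a}$, $\mathsf{T}_{0\cdot a}=\mathrm{id}$, $\mathsf{T}_{ta}^{-1}=\mathsf{T}_{-ta}$ --- whose action $[0,\infty)\times Z\to Z$, $(t,g)\mapsto\mathsf{T}_{ta}g$, is \emph{jointly} continuous; this is the routine estimate $d\big(g_j(z+t_ja),g_0(z+t_0a)\big)\leqslant d\big(g_j(z+t_ja),g_0(z+t_ja)\big)+d\big(g_0(z+t_ja),g_0(z+t_0a)\big)$, whose first term tends to $0$ uniformly on any compact $K$ because $\{z+t_ja:z\in K,\,j\}$ lies in a fixed compact set, and whose second term tends to $0$ by uniform continuity of $g_0$ on compacts.

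Granting this, the hypothesis says the discrete orbit $\{\mathsf{T}_{ka}f:k\geqslant 0\}$ is dense in $Z$, hence so is the semigroup orbit $\{\mathsf{T}_{ta}f:t\geqslant 0\}$; and, writing $b=\lambda a$ with $\lambda>0$, the conclusion is exactly that the time-$\lambda$ discretization $\{\mathsf{T}_{k\lambda a}f:k\geqslant 0\}$ of that same dense semigroup orbit is still dense. I would first dispose of rational $\lambda=p/q$ cheaply: since $\mathsf{T}_a=(\mathsf{T}_{a/q})^{q}$, the $\mathsf{T}_{a/q}$-orbit of $f$ contains the dense $\mathsf{T}_a$-orbit, so $f$ is hypercyclic for $\mathsf{T}_{a/q}$; then Ansari's theorem --- via Baire's theorem applied to the closed cover $Z=\bigcup_{j=0}^{p-1}\overline{\{(\mathsf{T}_{a/q})^{pk+j}f:k\geqslant 0\}}$ of the connected space $Z$, whose members are cyclically permuted by $\mathsf{T}_{a/q}$ --- gives that $f$ is hypercyclic for $(\mathsf{T}_{a/q})^{p}=\mathsf{T}_{\lambda a}$. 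For irrational $\lambda$ the semigroup cannot be bypassed, and one runs the Conejero--M\"uller--Peris chain: $(\mathsf{T}_{ta})_{t\geqslant 0}$ is topologically transitive (immediate from a dense orbit, $Z$ having no isolated points); each $\mathsf{T}_{ta}$, $t>0$, is then topologically transitive --- hence, by Birkhoff's transitivity theorem, hypercyclic with a residual set of hypercyclic vectors --- by a two-time-scale argument using the joint continuity together with unboundedness of every visiting-time set $\{t\geqslant 0:\mathsf{T}_{ta}f\in U\}$ (here nowhere local compactness enters: were such a set bounded by $M$, the portion of the orbit meeting $U$ would be confined to the compact set $\{\mathsf{T}_{ta}f:0\leqslant t\leqslant M\}$, forcing the nonempty open $U$ to be relatively compact); and finally the same Ansari-type device upgrades ``a hypercyclic vector exists'' to ``$f$ itself is one'', applied to the closed $\mathsf{T}_{\lambda a}$-invariant set $\overline{\{\mathsf{T}_{k\lambda a}f:k\geqslant 0\}}$ and using that the arcs $\{\mathsf{T}_{sa}f:0\leqslant s\leqslant\lambda\}$ are compact and connected.

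The real work --- the rest being quotation --- is to confirm that every step above rests solely on topological--dynamical features: Baire category, absence of isolated points, failure of local compactness, connectedness of $Z$, and joint continuity of the $[0,\infty)$-action, with no appeal to any linear structure. Since $\mathsf{Hol}(\mathbb{C}^n, X)$ enjoys all of these, the classical proofs of Ansari's theorem and of the Conejero--M\"uller--Peris discretization transfer essentially verbatim. The step I expect to demand the most care is the two-time-scale transitivity argument --- in particular establishing the nowhere local compactness of $\mathsf{Hol}(\mathbb{C}^n, X)$ and the resulting unboundedness of visiting times --- since that is the one place where the possibility of the orbit ``escaping to infinity'' must be genuinely ruled out.
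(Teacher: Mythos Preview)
Your approach differs substantially from the paper's. You invoke abstract machinery---Ansari's theorem for rational $\lambda$ and the Conejero--M\"uller--Peris discretization for irrational $\lambda$---applied to the semigroup $(\mathsf{T}_{ta})_{t\geq 0}$ on $Z=\mathsf{Hol}(\mathbb{C}^n, X)$. The paper instead runs the concrete construction behind \cite[Theorem~8]{amazing-theorem} directly: given a target $g$ on $\overline{\mathbb{B}}(0,r)$ and $\epsilon>0$, it picks representatives $\hat c_1,\dots,\hat c_k\in\mathbb{R}\cdot b$ of $\delta$-dense residue classes in $\mathbb{R}\cdot b/\mathbb{Z}\cdot b$, places translated copies of $g$ on disjoint balls $\overline{\mathbb{B}}(\hat c_i,r+\delta)$, and uses the Oka property (BOPA) to glue these into a single map $\hat g$ on a large ball. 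Then any $\mathsf{T}_a^{n}f$ approximating $\hat g$ automatically yields, by elementary arithmetic in $\mathbb{R}\cdot b/\mathbb{Z}\cdot b$, an integer $m$ with $\mathsf{T}_b^{m}f\approx g$. The Oka hypothesis is used \emph{explicitly} at the gluing step, not merely to secure topological properties of $Z$.

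There is a real gap in your sketch. In both the Ansari step and the final CMP step---upgrading ``$\mathsf{T}_{\lambda a}$ has \emph{some} hypercyclic vector'' to ``$f$ itself is one''---the standard proofs hinge on connectedness of the set $HC(T)$ of hypercyclic vectors, not merely of $Z$. In the linear setting this comes for free from scalar multiplication ($\mu y\in HC(T)$ whenever $y\in HC(T)$, $\mu\neq 0$), which is what makes the ``cyclically permuted closed cover'' collapse. In the nonlinear space $\mathsf{Hol}(\mathbb{C}^n,X)$ no such device is available, and your Baire argument on the cover $Z=\bigcup_{j}F_j$ does not conclude: the interiors $U_j=\mathrm{int}\,F_j$ are pairwise disjoint, but removing the nowhere-dense overlap $\bigcup_{i\neq j}(F_i\cap F_j)$ can disconnect a connected $Z$ (already $\mathbb{R}\setminus\{0\}$), so connectedness of $Z$ alone gives no contradiction. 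Your path $t\mapsto f(t\,\cdot)$ shows $Z$ is connected but says nothing about $HC(\mathsf{T}_a)$. A minor point: your nowhere-local-compactness argument for unbounded visiting times is unnecessary, since $\{\mathsf{T}_{ta}f:t\geq M\}=\mathsf{T}_{Ma}\big(\{\mathsf{T}_{ta}f:t\geq 0\}\big)$ is dense as the image of a dense set under a homeomorphism. The paper's hands-on route sidesteps all of this.
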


 Hence we shall consider the {\em hypercyclic directions} $I$
in the unit sphere $\mathbb{S}^{2n-1}\subset
\mathbb{C}^n$ such that
$F$ is hypercyclic for 
$\mathsf{T}_a$,
where  $a\in I$.

\medskip
\noindent
{\bf Convention.} 
$||\bullet||$ denotes the standard Euclidean norm on any $\mathbb{C}^k$, which shall be clear according to the context. $\mathbb{B}(a, r)$ stands for the ball centered at $a$ having radius $r$ with respect to the norm  $||\bullet||$. When $a$ is the origin, we write $\mathbb{B}(r)$ for shortness of $\mathbb{B}(\mathbf{0}, r)$. We abuse $O(1)$ for  uniformly bounded terms, which will vary. The differential operator $\dif^c$ stands for $\frac{\sqrt{-1}}{4\pi}(\bar{\partial}-\partial)$.

\medskip

\bigskip\noindent {\bf Theorem A.}
{\it
	Given any transcendental  growth function $\phi$ of the shape~\eqref{admissible analytic function},
	for
	countable  directions $\{\theta_i\}_{i\geqslant 1}$ in the unit sphere $\mathbb{S}^{2n-1}\subset
	\mathbb{C}^n$, there exist
	universal  holomorphic maps
	$F: \mathbb{C}^n \rightarrow \mathbb{C}^m$ with slow growth 
	\begin{equation}
		\label{desired growth in several variables}
		||F(z)||\leqslant
		\phi(||z||)
		\qquad
		\qquad
		{\scriptstyle
			(\forall\,z\,\in\, \mathbb{C}^n),
		}
	\end{equation}
	such that the hypercyclic directions of $F$ include $\{\theta_i\}_{i\geqslant 1}$.
	Namely, $F$ are hypercyclic for all  $\mathsf{T}_a$, where  $a$ are in some ray $\mathbb{R}_{+}\cdot \theta_i$. 
}

\bigskip

This is the main result of this paper, which answers Question~\ref{natural-question}.
The proof is by the aforementioned constructive {\em Algorithm}, whose underling idea goes back to an insight of Sibony, see Subsection~\ref{Sibony question}.

\smallskip
One application of Theorem~A is generalizing 
Theorem~\ref{chen-hunh-xie thm}
to higher dimensional universal holomorphic maps
$f: \mathbb{C}^n\rightarrow \mathbb{CP}^m$ ($m\geqslant n\geqslant 1$), keeping in mind that
 the method of~\cite{Xie-Chen-Tuan}  only works for $f: \mathbb{C}\rightarrow \mathbb{CP}^m$ (see Subsection~\ref{expain CHX}).

Recall that a Nevanlinna characteristic function of $f:\mathbb{C}^n \rightarrow \mathbb{CP}^m$ is given by
\[
T_f(r)
:=
\int_{1}^r
\frac{\dif t}{t^{2n-1}}
\int_{\mathbb{B}(t)}
f^*\omega_{\mathsf{FS}}\wedge \alpha^{n-1},
\]
where $\omega_{\mathsf{FS}} $ is the  Fubini-Study form on $\mathbb{CP}^m$ and where
$
\alpha
:=\dif\dif^c||z||^2
$
(cf.~\cite{Noguchi-Winkelmann-book, MR4265173}).

\medskip\noindent {\bf Theorem B.}
{\it
	Let $n\leqslant m$ be two positive integers. 
	Let $\psi: \mathbb{R}_{\geqslant 1}\rightarrow \mathbb{R}_+$
	be a continuous nondecreasing function tending to infinity. Then
	for
	given directions  $\{\theta_i\}_{i\geqslant 1}$ in the unit sphere $\mathbb{S}^{2n-1}\subset
	\mathbb{C}^n$, there exist
	universal entire holomorphic maps
	$f: \mathbb{C}^n \rightarrow \mathbb{CP}^m$ with slow growth 
	\begin{equation}
	\label{admissible-slow-growth cpn}
		T_f(r)
		\leqslant
		\psi(r)
		\cdot
		\log r
		\qquad
		{\scriptstyle
			(\forall\, r\,\geqslant\, 1),
		}
	\end{equation}
	such that  $f$ are hypercyclic for all nontrivial translations 
	$\mathsf{T}_a$,
	where  $a$ are in some ray
	$\mathbb{R}_+\cdot \theta_i$. 
}

\bigskip

Another application of Theorem~A is
determining minimal growth of universal holomorphic maps from $\mathbb{C}^n$ to a complex $m$-tori $\mathbb{T}^m$, i.e., $\mathbb{T}^m=\mathbb{C}^m/\Gamma$ for some lattice $\Gamma$ of full rank $2m$.

\medskip\noindent {\bf Theorem C.}
{\it
	Given any continuous nondecreasing function $\psi: \mathbb{R}_{\geqslant 1}\rightarrow \mathbb{R}_+$
  tending to infinity,
	for
	countably many  directions $\{\theta_i\}_{i\geqslant 1}$ in the unit sphere $\mathbb{S}^{2n-1}\subset
	\mathbb{C}^n$, there exist
	universal  holomorphic maps
	$F: \mathbb{C}^n \rightarrow \mathbb{T}^m$ with slow growth 
	\begin{equation}
		\label{desired growth in several variables tori}
		T_F(r)\leqslant
		r^{\psi(r)}
		\qquad
		\qquad
		{\scriptstyle
			(\forall\,r\,\geqslant\,1),
		}
	\end{equation}
	such that the hypercyclic directions of $F$ contains $\{\theta_i\}_{i\geqslant 1}$. 
}

\medskip

Without loss of generality, in~\eqref{desired growth in several variables tori} we take
	\[
T_F(r)
:=
\int_{1}^r
\frac{\dif t}{t^{2n-1}}
\int_{\mathbb{B}(t)}
F^*\omega_{\mathsf{std}}\wedge \alpha^{n-1},
\]
where $\omega_{\mathsf{std}}:=\dif\dif^c ||z||^2 $ is a  positive  $(1, 1)$ form  induced from the Euclidean space $\mathbb{C}^m$.

\smallskip

Recall a surprising result~\cite{MR3679721}  that some holomorphic functions  $f\in\mathcal{H}(\mathbb{C}^n)$ in several variables are simultaneously hypercyclic for all nontrivial translations $\mathsf{T}_a$ where $a\in \mathbb{C}^n\setminus \{\mathbf{0}\}$.
It is thus natural to ask

\begin{ques}
	\label{natural-question 2}
	Can we strengthen Theorem~A
	by moreover requiring the hypercyclic directions of $F$ to be the whole unit sphere $\mathbb{S}^{2n-1}$?
\end{ques}

Keeping in mind the slogan of modern Oka principle (see Section~\ref{section: reflection}), for the above question,
there seems no obvious topological obstruction at first glance. However,  the answer is  {\em No}, by deep reason in Nevanlinna theory.
Here for simplicity we only analyze the basic case  $F: \mathbb{C}\rightarrow \mathbb{C}$.

\medskip\noindent {\bf Theorem D.}
{\it
	There exist some
	transcendental growth function   $\phi$ of the shape~\eqref{admissible analytic function},
	such that for any entire holomorphic function
	$F\in \mathcal{H}(\mathbb{C})$ with slow growth
	\begin{equation}
		\label{thm 1.8 grow rate}
		|F(z)|\leqslant
		\phi(|z|)
		\qquad
		\qquad
		{\scriptstyle
			(\forall\,z\,\in\, \mathbb{C}),
		}
	\end{equation}
	the hypercyclic directions $I\subset \mathbb{S}^1$ of $F$
	must have Hausdorff dimension zero.
}

\medskip

Since the hypercyclic directions $I$ of a universal holomorphic function $F$ shall be of small size and could contain countable points, it is natural to ask if some $I$ of $F$ can contain uncountable directions. This time the answer is {\em yes}.
 For shortness  we again only consider  the case  $\mathcal{H}(\mathbb{C})$.

\medskip\noindent {\bf Theorem E.}
{\it
	Given any transcendental growth function  $\phi$ of the shape~\eqref{admissible analytic function},
	there exists some
	universal entire holomorphic function
	$F\in \mathcal{H}(\mathbb{C})$ with slow growth
	\[
	|F(z)|\leqslant
	\phi(|z|)
	\qquad
	\qquad
	{\scriptstyle
		(\forall\,z\,\in\, \mathbb{C}),
	}
	\]
	such that  the hypercyclic directions $I\subset \mathbb{S}^1$ of $F$ is uncountable.
}

\medskip

Thus one shall be prudent when using Oka principle, in case there might be certain delicate subtlety 
depending on the nature of the question.

\smallskip

Here is the structure of this paper. In Section~\ref{section: Oka} we
prove a comparison Lemma~\ref{comparison-lemma} to show that
the class of slow growth functions of shape~\eqref{admissible analytic function} in our theorems is optimal, and we
explain our central idea of ``patching'' together holomorphic discs inspired by  Question~\ref{Sibony-last-question} of Sibony.  A relevant speculation of Brunella is also 
recalled.
In Section~\ref{section: algorithm} we present an {\em Algorithm}, which is the engine in our approach for constructing universal entire holomorphic maps with slow growth.
In Section~\ref{section: proofs} we prove Lemma~\ref{all-in hypercyclic lemma} and all our Theorems A, B, C, D, E.
Lastly, in Section~\ref{section: reflection}, we highlight key questions for future research.

\bigskip\noindent
{\bf Acknowledgments.}
We were inspired by an insight  of Sibony~\cite{Sibony2021}.
This paper would never happen  without 
 Forstneri\v{c}'s  excellent online lectures on Oka manifolds theory in AMSS SCV seminar, November 2022.
We thank Zhangchi Chen and Dinh Tuan Huynh for stimulating conversations  and valuable suggestions which improved this manuscript.

\bigskip\noindent
{\bf Funding.}
The second named author is
partially supported by 
National Key R\&D Program of China Grant
No.~2021YFA1003100 and  NSFC Grant No.~12288201.

\bigskip\noindent
{\bf Claim.}  There is no conflict of interest and there is no associated
data in this work.

\section{\bf Background}
\label{section: Oka}

\subsection{About the construction of Du\u{\i}os Ruis}
\label{subsection 2.1}

The original article~\cite{DRuis-paper-1}  of the Theorem~\ref{thm-Duios-Ruis} 
only concerns a special class of analytic functions $\phi$, by the following reason.

\begin{lem}
	\label{comparison-lemma}
	Let $\phi: \,\mathbb{R}_{\geqslant 0}\, \rightarrow\, \mathbb{R}_{> 0}$ be a continuous function growing faster than any polynomials
	\[
	\lim_{r\rightarrow \infty}
	\phi(r)/r^N
	=\infty
	\qquad
	{\scriptstyle
		(\forall\, N\, \geqslant\, 1).
	}
	\]
	Then the function $\phi(r)$ dominates some
	analytic function 
	$
	\sum_{i=0}^{\infty}
	A_i\,r^i
	$
	with positive  coefficients
	$A_i$ shrinking to zero rapidly
	\[
	\lim_{i\rightarrow \infty}
	A_{i+1}/A_i
	=
	0.
	\]
\end{lem}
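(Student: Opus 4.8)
The plan is to build the analytic comparison function \emph{term by term}, arranging each monomial $A_i r^i$ to sit below a fixed geometric fraction $2^{-i-1}\phi(r)$ of $\phi$, and at the same time forcing the consecutive ratios $A_{i+1}/A_i$ to decay by a second, independent constraint built into the recursion.

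Concretely, for every integer $i\geqslant 0$ I would first set
\[
B_i:=\inf_{r>0}\ \frac{\phi(r)}{r^i}.
\]
The first point to check is that $B_i\in(0,\infty)$ for each $i$: the function $r\mapsto \phi(r)/r^i$ is continuous and strictly positive on $(0,\infty)$, it tends to $+\infty$ as $r\to\infty$ by hypothesis, and for $i\geqslant 1$ it also tends to $+\infty$ as $r\to 0^+$ because $\phi(r)\to\phi(0)>0$ while $r^i\to 0$; hence it attains a positive minimum, and for $i=0$ the same argument applies to $\phi$ itself. By construction $B_i r^i\leqslant \phi(r)$ for all $r>0$ (and $r=0$ is trivial). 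Next I would define the coefficients recursively by $A_0:=B_0/2$ and
\[
A_{i+1}:=\min\Bigl\{\,2^{-i-2}B_{i+1},\ \tfrac{1}{i+1}\,A_i\,\Bigr\}\qquad(i\geqslant 0).
\]
A one-line induction gives $0<A_i\leqslant 2^{-i-1}B_i$ for all $i$, whence
\[
\sum_{i=0}^{\infty}A_i\,r^i\ \leqslant\ \sum_{i=0}^{\infty}2^{-i-1}B_i\,r^i\ \leqslant\ \sum_{i=0}^{\infty}2^{-i-1}\phi(r)\ =\ \phi(r)\qquad(\forall\,r\geqslant 0),
\]
which is the claimed domination (and shows the series converges everywhere). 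Finally, $0<A_{i+1}/A_i\leqslant 1/(i+1)$ forces $\lim_{i\to\infty}A_{i+1}/A_i=0$, so all $A_i$ are positive, the radius of convergence is infinite, and $\sum A_i r^i$ is a genuine analytic function of the required shape.

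I do not expect a genuinely hard step here; the only subtleties are (a) the positivity $B_i>0$, which really uses the two-sided blow-up of $\phi(r)/r^i$ (at $\infty$ from the polynomial-beating hypothesis, at $0$ from $\phi(0)>0$), and (b) the need to \emph{interleave} the two demands: the naive choice $A_i=2^{-i-1}B_i$ secures domination but the ratios $B_{i+1}/B_i$ need not tend to $0$ for an arbitrary $\phi$, whereas inserting the clause $A_{i+1}\leqslant A_i/(i+1)$ into the recursion repairs this without harming the inequality $A_i\leqslant 2^{-i-1}B_i$.
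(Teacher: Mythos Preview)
Your proof is correct and follows essentially the same recursive scheme as the paper: both impose the ratio constraint $A_{i+1}\leqslant A_i/(i+1)$ together with a domination constraint, the only cosmetic difference being that the paper tracks the residual budget $\phi(r)-\sum_{j\leqslant i}A_j r^j$ at each stage, whereas you pre-allocate a geometric share $2^{-i-1}\phi(r)$ to the $i$-th term via $B_i=\inf_{r>0}\phi(r)/r^i$.
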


\begin{proof} 
	Set $A_0$ to be 
	half of the minimum value of 
	the function $\phi(r)$
	on the interval $[0, \infty[$. It is clear that $A_0>0$.
	For $i=0, 1, 2,  \dots$,
	inductively we set
	$A_{i+1}$
	to be the smaller number between
	$\frac{A_i}{i+1}$ and  half of the minimum value of 
	the continuous positive function 
$
	\big(
	\phi(r)-\sum_{j=0}^i A_j r^j
	\big)
	/r^{i+1}
$
	on the interval $]0, \infty[$. It is clear that $A_{i+1}>0$, and that
$
	A_{i+1}/A_i\leqslant \frac{1}{i+1}$,
	$
	\phi(r)
	>
	\sum_{j=0}^{i+1} A_j\, r^j
$
	for all $r\geqslant 0$.
	By taking $i \rightarrow \infty$ we conclude
$
	\phi(r)
	\geqslant
	\sum_{j=0}^{\infty} A_j\, r^j$
	for all $r\geqslant 0$.
\end{proof}

Theorem~\ref{thm-Duios-Ruis} is claimed in the  article~\cite{DRuis-paper-1}, with no   explanations but a few  formulas to indicate the construction, in which   the fundamental theorem of algebra plays an important role:
\begin{equation}
	\label{DR's trick}
	\text{\em every polynomial in }
	\mathbb{C}[z]\,\, \text{\em  factors as products of  one forms.}
\end{equation}
Du\u{\i}os Ruis' universal entire functions with slow growth are constructed by infinite products of certain well-chosen one forms. Therefore, this  method is strictly one dimensional, as~\eqref{DR's trick} fails for
$\mathbb{C}[z_1, \dots, z_n]$ ($n\geqslant 2$).

\subsection{An insight of Sibony}
\label{Sibony question}

After receiving an early manuscript of
\cite{MR4338457}, 
Sibony asked

\begin{ques}[\cite{Sibony2021}]
	\label{Sibony-last-question}
	Show that for $X=\mathbb{CP}^n$, or complex tori, all Ahlfors  currents can be obtained by single entire curve $f: \mathbb{C}\rightarrow X$.	
\end{ques}

 Sibony suggested a hint~\cite{MR1129075}.
 We regret that we had not discussed this problem with him before it was too late.
 
If  only concentric holomorphic discs
$\{f(\mathbb{D}(r))\}_{r>0}$ are allowed to use (as \cite{MR4338457}),
then we do not know the answer.  
If one can use any holomorphic discs $\{f(\mathbb{D}(a, r))\}_{a\in \mathbb{C},  r>0}$, then we can follow
 Sibony's insight by Oka theory. In fact, such phenomenon holds for  compact complex manifolds $Y$ satisfying a weak Runge's approximation property that
 {\em every holomorphic map from a neighborhood of two disjoint closed discs $D_1, D_2\subset D$ into $Y$ can be ``extended'' to a holomorphic map from a neighborhood of a larger closed disc $D\subset \mathbb{C}$ to $Y$ within given small positive error bound with respect to a complete distance $d_Y$ on $Y$.}
 Such manifolds $Y$
 include all Oka-$1$ manifolds, in particular all rationally connected manifolds~\cite{A-F-2023}. 
Modulo certain technical details,  such entire curve can be constructed by ``patching'' together some countable holomorphic discs. The same method can also construct  universal holomorphic maps into Oka manifolds (a result due to Kusakabe~\cite[Theorem 1.4]{Kusababe-first-paper}). For shortness, we illustrate the method by the following 

\medskip\noindent
{\em Proof of Birkhoff's Theorem~\ref{Birkhoff 1929 theorem} using Runge's approximation theorem.}

\smallskip
Fix a countable and dense subfield of $\mathbb{C}$,
say complex rational numbers
\[\mathbb{Q}_{c}:=
\{
z=x+y\sqrt{-1}:
x, y\in \mathbb{Q}
\}.
\]
Since
$\mathbb{Z}_+ \times \mathbb{Z}_+$ is  countable,  there is some bijection
\[
\varphi=(\varphi_1, \varphi_2)\,
:\,
\mathbb{Z}_+
\overset{\sim}{\longrightarrow}
\mathbb{Z}_+ \times \mathbb{Z}_+.
\]
Fix small positive error bounds $\{\epsilon_i\}_{i\geqslant 1}$, say $\epsilon_i=1/2^i$,  such that 
\begin{equation}
    \label{epsilon 0-time}
\lim_{j\rightarrow \infty} \sum_{i\geqslant j}\, \epsilon_i\,
=\,0.
\end{equation}
We arrange all elements in the countable set
$\mathbb{Q}_{c}[z]
	\times
	\mathbb{Q}_+$,
 in some subsequent order 
$
\{
(f_i, r_i)\}_{i\geqslant 1}$.

The punchline, which will be understood later, is to
make another sequence $\{(g_j, \hat{r}_j)\}_{j\geqslant 1}$
so that each $(f_i, r_i)$ repeats infinitely many times in $(g_j, \hat{r}_j)_{j\geqslant 1}$,
e.g.
\begin{equation}
    \label{countable*countable=countable trick}
g_j:=f_{\varphi_1(j)},
\qquad
	\hat{r}_j
	:=
	r_{\varphi_1(j)}.
	\end{equation}
Now we construct a desired entire function $f$ by taking the limits of some convergent holomorphic functions
\[
F_i: \mathbb{D}_{R_i}\rightarrow \mathbb{C},
\quad
R_i \nearrow \infty
\qquad
{\scriptstyle
	(i\,=\,1,\, 2,\, 3,\, \dots)}.
\]

In {\em Step $1$}, we set three key data $F_1:=g_1$, $R_1:=\hat{r}_1$, $c_1:=0$.

Subsequently, in {\em Step $i+1$} for $i=1, 2, 3, \dots$,
we choose a center $c_{i+1}\in \mathbb{Z}_+\cdot a$ far from the origin, so that
the two discs
$\mathbb{D}_{R_i}$ and
$\mathbb{D}(c_{i+1}, \hat{r}_{i+1})$ 
keeps  positive distance, i.e., $|c_{i+1}|>R_i+\hat{r}_{i+1}$.
We define a polynomial $\hat{g}_{i+1}(\bullet):=g_{i+1}(\bullet-c_{i+1})$ on $\mathbb{D}(c_{i+1}, \hat{r}_{i+1})$ 
by translation of $g_{i+1}$. Now
we take a large disc $\mathbb{D}_{R_{i+1}}$ 
having radius 
$
R_{i+1}>|c_{i+1}|+\hat{r}_{i+1}$, so that it 
contains
$\mathbb{D}_{R_i}$ and $\mathbb{D}(c_{i+1}, \hat{r}_{i+1})$ as relatively compact subsets.
By Runge's approximation theorem,
we can ``extend'' $F_i$ and $\hat{g}_{i+1}$ to some polynomial $F_{i+1}:
\mathbb{D}_{R_{i+1}}
\rightarrow
\mathbb{C}$ 
within admissible errors ($d_\mathbb{C}$ stands for the usual Euclidean distance on $\mathbb{C}$)
\begin{align}
	\label{small errors}
	\sup_{z\in \mathbb{D}_{R_i}}\,d_{\mathbb{C}}(F_{i+1}(z), F_i(z))
	\leqslant \epsilon_{i+1},
	\qquad
	\sup_{z\in \mathbb{D}(c_{i+1}, \hat{r}_{i+1})}
	d_{\mathbb{C}}(F_{i+1}(z), \hat{g}_{i+1}(z))
	\leqslant \epsilon_{i+1}.
\end{align}
By~\eqref{epsilon 0-time}, it is clear that
$\{F_i\}_{i\geqslant 1}$
converges to an
entire curve
$f: \mathbb{C}\rightarrow \mathbb{C}$. Moreover,  by using~\eqref{small errors} repeatedly, we see that $f$  approximates each $\hat{g}_{i+1}$ effectively

\begin{align*}
	\sup_{z\in \mathbb{D}(c_{i+1}, \hat{r}_{i+1})}
	d_{\mathbb{C}}(f(z), \hat{g}_{i+1}(z))
	&
	\leqslant
	\sup_{z\in \mathbb{D}(c_{i+1}, \hat{r}_{i+1})}
d_{\mathbb{C}}(F_{i+1}(z), \hat{g}_{i+1}(z))
	+
	\sum_{j\geqslant i+1}
	\sup_{z\in \mathbb{D}_{R_j}}\,d_{\mathbb{C}}(F_{j+1}(z), F_j(z))
	\nonumber
	\\
	&
	\leqslant
	\epsilon_{i+1}+
	\sum_{j\geqslant i+2}
	\epsilon_j
	\,\,
	\rightarrow
	0
	\qquad
	{\scriptstyle
		(\text{as } i\, \rightarrow\, \infty)}
		\qquad
		[\text{by~\eqref{epsilon 0-time}}].
\end{align*}
In other words
\[
\sup_{z\in \mathbb{D}_{\hat{r}_{i+1}}}\,
d_{\mathbb{C}}(f(c_{i+1}+ z), {g}_{i+1}(z))
\rightarrow
0
\qquad
{\scriptstyle
	(\text{as } i\, \rightarrow\, \infty)}.
\]
This finishes the proof, since
$\mathbb{Q}_{c}[z]$ is dense in $\mathcal{H}(\mathbb{C})$ under the compact-open topology.
\qed

\medskip

The same idea also plays a key r\^ole in the proof of Theorem~\ref{chen-hunh-xie thm},
 as we shall now explain.

\subsection{Thinking process  of~\cite{Xie-Chen-Tuan}}
\label{expain CHX}
Without knowing  Du\u{\i}os Ruis' Theorem~\ref{thm-Duios-Ruis},
the  estimate~\eqref{admissible-slow-growth} was  anticipated by Dinh Tuan Huynh
as an optimal answer to~\cite[Problem 9.1]{Dinh-Sibony-list},
based on evidence given by~\cite{MR1768440}, in which some meromorphic functions $g$ with growth $T_g(r)\leqslant\psi(r)(\log r)^2$ were constructed so that any entire function can be approximated by translations of $g$.

In the preceding proof, one sees clearly the idea of ``patching together'' distinct holomorphic discs,
up to small errors, by Runge's approximation property of the ambient manifold $X$. 
For the meromorphic part of~\cite[Problem 9.1]{Dinh-Sibony-list}, where $X=\mathbb{CP}^1$,
such idea works quite  effectively as follows.

Fix a countable and dense subfield of $\mathbb{C}$,
say 
\[\mathbb{Q}_{c}:=
\{
z=x+y\sqrt{-1}:
x, y\in \mathbb{Q}
\}.
\]
First, we can find countable rational functions 
\[
f_i: \mathbb{D}_{r_i}\rightarrow \mathbb{CP}^1
\qquad
{\scriptstyle
	( i\,=\,1,\,2,\, 3,\,\dots;\,\, 0\,<\,r_i\,<\,\infty)}
\] 
such that
any meromophic function can be reached as a limit by elements in $(f_i)_{i\geqslant 1}$.
Indeed, the Cartesian product 
$\mathbb{Q}_{c}(z)\times \mathbb{Q}_+$
of all rational functions  with $\mathbb{Q}_{c}$ coefficients and all positive rational  radii, is countable and sufficient.

Next, we play the same trick as~\eqref{countable*countable=countable trick} to gather a sequence of rational functions
\[
g_j: \mathbb{D}_{\ell_j}
\rightarrow 
X
\qquad
{\scriptstyle
	(j\,\geqslant\, 1)}
\]
by elements of $(f_i)_{i\geqslant 1}$,
such that 
each $f_i$ for $i=1,2,3,\dots$  repeats infinitely many times in $(g_j)_{j\geqslant 1}$.

If all the rational functions $f_i$ decay to zero near the infinity point,
i.e., 
\begin{equation}
	\label{key assumption}
	f_i=p_i/q_i
	\text{\,
		for some polynomials } p_i, q_i\in \mathbb{Q}_{c}[z] \text{ with } \deg p_i < \deg q_i
	\qquad
	{\scriptstyle
		( i\,=\,1,\,2,\, 3,\,\dots)},
\end{equation}
by the argument in the preceding proof,
we can construct a universal meromorphic function 
\begin{equation}
	\label{summation trick}
	F(z)
	:=
	\sum_{j= 1}^{\infty}
	g_j(z-c_j),
\end{equation}
where $c_j\in \mathbb{C}$ are subsequently well chosen, having absolute values increasing sufficiently fast
\[
1
\ll
|c_1|
\ll
|c_2|
\ll
|c_3|
\ll
\cdots.
\]		

Hence the remaining task
for constructing universal meromorphic function is to make sure that~\eqref{key assumption} holds. This can be achieved by approximating and replacing each $f_i$ on $\mathbb{D}_{r_i}$ by countably many rational functions $\{h_{i, j}\}_{j\geqslant 1}$ vanishing at the infinity.
Indeed, if $f_i$ is a polynomial, then the same as~\cite{MR1768440}
we can use Runge's approximation theorem~\cite{Runge-1885}
to find rational functions
of the shape
\[
\hat{h}_{i, j}
=
\sum_{k=1}^{N_{i, j}}\,
\frac{C_{i, j, k}}{z-\xi_{i, j, k}}
\qquad
{\scriptstyle
	(j\,=\,1,\, 2,\, 3,\, \dots; \,\,N_{i, j}\,\in\, \mathbb{Z}_+;\,\, C_{i, j, k}\,\in\, \mathbb{C};\,\,
	\xi_{i, j, k}\,
	\in \,
	\partial \mathbb{D}_{r_i+1})
}
\] 
to approximate $f_i$ on $\mathbb{D}_{r_i}$ uniformly. 
Therefore, since $\mathbb{Q}_c\subset \mathbb{C}$ is dense,
$f_i$ can be approximated  by
\begin{equation}
	\label{desired shape}
	\text{
		rational functions with $\mathbb{Q}_{c}$  coefficients of the shape~\eqref{key assumption}\qquad
		(countable!)}
\end{equation}
uniformly on $\mathbb{D}_{r_i}$.

Yet how to deal with a general rational function $f_i$? A trick comes naturally:
\[
\text{decompose  \quad}
f_i=f_{i, \text{ pole}}+f_{i, \text{ holo}}
\] 
as the sum of the principle pole part $f_{i, \text{ pole}}$ and the remaining polynomial part
$
f_{i, \text{ holo}}.
$
The first part $f_{i, \text{ pole}}$ automatically enjoys the desired decay near $\infty\in \mathbb{CP}^1$,
and obviously can be approximated by~\eqref{desired shape}.
The later part can be handled likewise by using Runge's approximation theorem.
Hence~\eqref{key assumption} can be achieved in practice.

Lastly, by Nevanlinna theory (cf.~\cite{MR4265173, Noguchi-Winkelmann-book}),
the Shimizu-Ahlfors geometric version of $T_F(r)$
is equivalent to Nevanlinna's  characteristic function~\cite{MR1555233}
\[
T_F(r)
= 
N_F(r, \infty)
+
m_F(r, \infty)
+
O(1),
\] 
where $N_F(r, \infty)$
is the counting function
(with logarithmic weight)
about poles of $F$,
and where the proximity function $m_F(r, \infty)$
measures the closeness of $F(\partial \mathbb{D}_r)$ to $\infty\in \mathbb{CP}^1$.
Since every $g_i(z)\approx 0$ for  $|z|\gg 1$, we have
$
m_F(r, \infty)=0
$
for most $r$.
Thus  $T_F(r)\approx N_F(r, \infty)$
can be made to grow as slowly as possible, by throwing subsequently all the centers
\[
1
\ll
|c_1|
\ll
|c_2|
\ll
|c_3|
\ll
\cdots
\]
far away from the origin.

Alternatively, Zhangchi Chen found an  elegant trick to achieve~\eqref{key assumption}:
\[
\text{replacing} \,
f_i\,
\text{\,by countably many rational functions\,}
\,
f_i
\cdot
\Big(\frac{M_{i, j}}{z+M_{i, j}}\Big)^{\deg f_i+1}
\quad
{\scriptstyle
	(j\,=\,1,\, 2,\, 3,\, \dots; \,\,M_{i, j}\,\in \,\mathbb{Q}_c)}
\]
where $r_i\ll |M_{i, 1}| \ll |M_{i, 2}| 
\ll |M_{i, 3}| \ll \cdots\nearrow \infty$.
Thus one avoids using
Runge's theorem, since 
\[
\Big(\frac{M_{i, j}}{z+M_{i, j}}\Big)^{\deg f_i+1}
\rightarrow
1
\qquad
{\scriptstyle
	(\text{as } j\, \rightarrow\, \infty)}
\]
uniformly for all $z\in \mathbb{D}_{r_i}$. 

The same method works for $\mathbb{C}$ to $\mathbb{CP}^m$ (any $m\geqslant 1$) as well, see~\cite{Xie-Chen-Tuan} for details.
However, for constructing universal holomorphic maps from $\mathbb{C}^n$ to $\mathbb{CP}^m$  ($m\geqslant n\geqslant 2$) with slow growth, the approach of~\cite{Xie-Chen-Tuan} does not work, because poles
of meromorphic functions in several variables are no longer  discrete points.

\subsection{A question of Brunella}
In~\cite[p.~200]{MR1703368} Brunella asked: if for any entire curve $f: \mathbb{C}\rightarrow X$
into a compact complex  manifold $X$, one can ``cleverly'' choose some radii sequence $\{r_i\}_{i\geqslant 1}$ such that $\{f(\mathbb{D}_{r_i})\}_{i\geqslant 1}$ produces  some Nevanlinna / Ahlfors current with either trivial singular part or
trivial diffuse part.

Note that for the exotic entire curves  in~\cite{MR4338457}, for generic (in probability sense) choices of 
$\{r_i\}_{i\geqslant 1}\nearrow \infty$,
 Brunella's anticipation is correct. One may wonder what shall be a canonical Nevanlinna / Ahlfors current associates with
 an entire curve $f: \mathbb{C}\rightarrow X$. This problem is intimately caused by {\em flexibility v.s. rigidity } in complex analysis, for which Oka and Kobayashi shall take some responsibility.
 
If counterexamples $f: \mathbb{C}\rightarrow X$ to Brunella's question ever exist, one shall search $X$ among compact Oka manifolds.

\section{\bf Algorithm}
\label{section: algorithm}
\subsection{One variable case}
\label{One variable case}
We use the idea from Subsection~\ref{Sibony question}
to provide a constructive  proof, different to~\cite{DRuis-paper-1} and~\cite[p.~1433]{Chan-Shapiro-91}, of Du\u{\i}os Ruis' Theorem~\ref{thm-Duios-Ruis}.

We enumerate all  elements in the countable set
$
 \mathbb{Q}_{c}[z]
	\times
	\mathbb{Q}_+$
 in some subsequent order
$
\{
(f_i, r_i)\}_{i\geqslant 1}$. 
Fix an auxiliary bijection 
\begin{equation}
    \label{Z2 bijection to Z}
\varphi=(\varphi_1, \varphi_2)\,
:\,
\mathbb{Z}_+
\overset{\sim}{\longrightarrow}
\mathbb{Z}_+ \times \mathbb{Z}_+.
\end{equation}
We
make another sequence $\{(g_j, \hat{r}_j)\}_{j\geqslant 1}$ so that 
\begin{equation}
    \label{repeats infinitely many times}
\text{
each $(f_i, r_i)$ repeats infinitely many times in $(g_j, \hat{r}_j)_{j\geqslant 1}$.}
\end{equation}
For instance, we can take 
\begin{equation}
	\label{set g_j}
	g_j:=f_{\varphi_1(j)},
	\quad
	\hat{r}_j
	:=
	r_{\varphi_1(j)}.
\end{equation}
Such trick was also visible in~\cite[p.~321]{MR4338457} for some exotic construction.

Given analytic growth function $\phi$ of the shape~\eqref{admissible analytic function}, our {\sl Algorithm} will construct  universal entire functions
$
f:=S_0+S_1+S_2+S_3+\cdots
$
inductively, 
where in each {\em Step $k$} for $k=0, 1, 2, \dots$ we add one piece of jigsaw puzzles 
\begin{equation}
    \label{S_k small coefficients}
S_k
:=
\sum_{j=m_k}^{M_k}\,
a_j\,z^j\qquad
\text{
having small  coefficients } |a_j|\leqslant A_j
\end{equation}
where
$
0=m_0=M_0<
m_1\leqslant M_1 <m_{2}\leqslant M_{2}<m_{3}\leqslant M_{3}<\cdots.
$
Thus the desired estimate~\eqref{DR bound} holds immediately
\[
|f(z)|\,
\leqslant\,
\sum_{k=0}^{\infty}
\,|S_k(z)|\,
\leqslant\,
\sum_{k=0}^{\infty}\,
\sum_{j=m_k}^{M_k}\,
|a_j|\,|z|^j\,
\leqslant \,
\sum_{k=0}^{\infty}\,
\sum_{j=m_k}^{M_k}\,
|A_j|\,|z|^j\,
\leqslant \,
\sum_{j=0}^{\infty}\,
|A_j|\,|z|^j\,
=\,
\phi(|z|).
\]

Fix small positive error bounds $\{\epsilon_i\}_{i\geqslant 1}$, say $\epsilon_i:=2^{-i}$, so that
\begin{equation}
\label{sum of epsilons is almost 0}
\lim_{j\rightarrow \infty}\,
\sum_{i\geqslant j}\,
\epsilon_i\,
=\,\
0.
\end{equation}

Set the initial three key data 
\[S_0=\mathbf{0}\in\mathbb{C}[z],\quad 
R_0=2023\in \mathbb{R}_+,\quad 
c_0=0\in \mathbb{C}.
\]
In each Step $k=1, 2, 3, \dots$, we pick a center $c_k\in \mathbb{C}$ far away from the  disc $\mathbb{D}_{R_{k-1}}$ and then cook up
some $S_k$ satisfying:

\begin{itemize}
	\smallskip
	\item[(1)]
	on the disc $\mathbb{D}_{R_{k-1}}$ the polynomial $S_k$ is negligible 
	\begin{equation}
		\label{condition 1}
		|S_k(z)|\leqslant \epsilon_k 
		\qquad
		{\scriptstyle
			(\forall\,z\,\in\, \mathbb{D}_{R_{k-1}})};
	\end{equation}
	
	\smallskip
	\item[(2)]
	the partial sum
	$G_k:=(S_0+S_1+\cdots+S_{k-1})+S_k$ 
	approximates 
	$\mathsf{T}_{-c_k}\,g_k$ on $\mathbb{D}(c_k, \hat{r}_k)$ within small error $\epsilon_k$, i.e.,
	\begin{equation}
		\label{condition 2}
		|G_k(c_k+z)-g_k(z)|\leqslant \epsilon_k
		\qquad
		\qquad
		{\scriptstyle
			(\forall\,z\,\in\, \mathbb{D}_{\hat{r}_k})}.
	\end{equation}
\end{itemize}
We end Step $k$ by choosing a large radius
$R_k>|c_k|+\hat{r}_k$,
and then move on to Step $k+1$.

\bigskip

If the above algorithm runs successfully in all Steps $k=1, 2, 3, \dots$, then  $f$ is universal by virtue of~\eqref{condition 2},~\eqref{repeats infinitely many times},~\eqref{sum of epsilons is almost 0}. We now fulfill the details.

\medskip

The restriction~\eqref{condition 1} can easily be guaranteed by requiring  all coefficients of $S_k$ to be  small enough, say 
\begin{equation}
	\label{small a_j}
	|a_j|\,
	\leqslant\,
	\min\,\{A_j,\,
	\epsilon_k\cdot \big[
	(M_{k}-m_{k}+1)\cdot R_{k-1}^{j}
	\big]^{-1}\}
	\qquad
	{\scriptstyle
		(m_k\,\leqslant\, j\,\leqslant\,M_{k})}.
\end{equation}

However, the proximity goal~\eqref{condition 2} seems doubtful at  first glance. The existence of such $S_k$ is indicated by Runge's  theorem
on polynomial approximations. Indeed, 
if ignoring  the boundedness restriction~\eqref{S_k small coefficients} on the coefficients of $S_k$,
such $S_k$ certainly exists by Runge's  theorem.
By varying $c_k$ far away from the origin continuously, we may expect that certain equivariant $S_k\in \mathbb{C}(c_k)[z]$ exists, and the coefficients are 
decaying to zero as $c_k\rightarrow \infty$. Moreover, we may expect that the degree of $S_k$ with respect to variable $z$ is exactly $\max\{\deg G_{k-1}, \deg g_k\}$.

Now let us clarify the above imagination by explicit computation. The idea is, regarding $c_k$ as a new variable, to construct polynomial $S_k$ with coefficients in the function field $\mathbb{C}(c_k)$ of the shape
\begin{equation}
	\label{a_j shape}
	a_j\,\in\, \frac{1}{c_k}\cdot \mathbb{C}
	\Big[\frac{1}{c_k}
	\Big]
	\qquad
	{\scriptstyle
		(m_k\,\leqslant\, j\,\leqslant\,M_{k})},
\end{equation} 
so that~\eqref{condition 1} automatically holds by later specializing $|c_k|\gg 1$. 
Similarly, we can fix~\eqref{condition 2} by showing that
\begin{equation}
	\label{magic}
	G_k(c_k+z)-g_k(z)
	\text{
		is a polynomial of variable $z$, having coefficients in } \frac{1}{c_k}\cdot \mathbb{C}
	\Big[\frac{1}{c_k}
	\Big].
\end{equation}

Firstly, by gathering binomial expansions, we rewrite
\begin{equation}
	\label{hat-a_k_i}
	(S_0+S_1+\cdots+S_{k-1})(c_k+z)-g_k(z)\,
	=\sum_{i=0}^{\ell_{k}}\, \hat{a}_{k, i}\, z^i,
\end{equation}
where $\ell_k:=\max\{M_{k-1}, \deg g_k\}$,
and where the coefficients
\begin{equation}
	\label{expresion of hat a}
	\hat{a}_{k, i}\,
	=\,
	\sum_{i\leqslant j\leqslant M_{k-1}}\,
	a_j
	\,
	\binom{j}{i}\,c_k^{j-i}\,
	-\,
	\frac{1}{i!}\,
	g^{(i)}_k(0)
	\quad
	\in\, \mathbb{C}[c_k]
	\qquad
	{\scriptstyle
		(i\,=\,0,\,1,\,2,\, \dots,\,\ell_k)}.
\end{equation}

Next, we collect (we will take $m_k>\ell_k$)
\begin{equation}
	\label{h.o.t.}
	S_k(z+c_k)\,
	=\,
	\sum_{j=m_k}^{M_k}\,
	a_j\,(z+c_k)^j\,
	=\,
	\sum_{i=0}^{\ell_k}
	\Big[
	\sum_{j=m_k}^{M_k}\,
	a_j
	\,
	\binom{j}{i}\,c_k^{j-i}
	\Big] 
	\cdot
	z^i
	+(\mathsf{h.o.t.})
\end{equation}
where $(\mathsf{h.o.t})$ are the remaining higher order terms with respect to variable $z$.

Declare
$
m_k:=\ell_{k}+1$ and
$M_k:=2\ell_{k}+1
$
for  solving the system of linear equations 
\begin{equation}
	\label{system of linear equations}
	\sum_{j=m_k}^{M_k}\,
	a_j
	\,
	\binom{j}{i}\,c_k^{j-i}\,
	=\,
	-\,\hat{a}_{k, i}
	\qquad
	{\scriptstyle
		(i\,=\,0,\,1,\,2,\, \dots,\,\ell_k)}
\end{equation}
with the just number $\ell_{k}+1$ of 
unknown variables $\{a_j\}_{m_k\leqslant j\leqslant M_k}$.

\begin{lem}
	The determinant of the above $(\ell_k+1)\times (\ell_k+1)$ coefficient matrix has neat expression
	\begin{equation}
	\label{big determinant}
	\det\,
	\bigg[\binom{j}{i}\,c_k^{j-i}
	\bigg]_{
		\substack{	0\leqslant i\leqslant \ell_k
			\\
			m_k\leqslant j\leqslant M_k
	}}\,
	=\,
	c_k^{(\ell_k+1)^2}.
	\end{equation}
\end{lem}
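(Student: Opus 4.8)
The plan is to realize the columns of this matrix as consecutive iterates of a single ``nilpotent-plus-scalar'' operator applied to a fixed vector, which collapses the determinant into two triangular ones. Write $n:=\ell_k+1$ and $c:=c_k$, so that the rows are indexed by $i\in\{0,1,\dots,n-1\}$, the columns by $j\in\{n,n+1,\dots,2n-1\}$, and the $(i,j)$-entry is $\binom{j}{i}c^{j-i}$. For each $j\geqslant 0$ let $E_j\in\mathbb{C}^n$ be the column vector whose $i$-th coordinate is $\binom{j}{i}c^{j-i}$. First I would note, using Pascal's identity $\binom{j}{i}=\binom{j-1}{i}+\binom{j-1}{i-1}$, that
\[
E_j=(c\,I+N)\,E_{j-1}
\qquad
{\scriptstyle (j\,\geqslant\, 1)},
\]
where $N$ is the $n\times n$ nilpotent matrix with $1$'s on the first subdiagonal (i.e.\ $Ne_{i-1}=e_i$ for $1\leqslant i\leqslant n-1$ and $Ne_{n-1}=0$), together with the initial condition $E_0=e_0$, which holds because $\binom{0}{i}=0$ for $i\geqslant 1$. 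Iterating gives $E_j=(cI+N)^{j}e_0$ for all $j\geqslant 0$.

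Next I would factor the matrix in question, whose columns are $E_n,E_{n+1},\dots,E_{2n-1}$, as
\[
\big[\,E_n\ \ E_{n+1}\ \cdots\ E_{2n-1}\,\big]
\;=\;
(cI+N)^{n}\cdot
\big[\,e_0\ \ (cI+N)e_0\ \cdots\ (cI+N)^{n-1}e_0\,\big],
\]
so that its determinant is the product of $\det\!\big((cI+N)^{n}\big)$ and $\det\big[\,e_0\ (cI+N)e_0\ \cdots\ (cI+N)^{n-1}e_0\,\big]$. Since $cI+N$ is lower triangular with every diagonal entry equal to $c$, we get $\det(cI+N)=c^{n}$, hence the first factor equals $c^{n^2}$. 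For the second factor, since $I$ and $N$ commute the binomial theorem yields $(cI+N)^{k}e_0=\sum_{i=0}^{k}\binom{k}{i}c^{k-i}e_i$, so the matrix $\big[\,e_0\ (cI+N)e_0\ \cdots\ (cI+N)^{n-1}e_0\,\big]$ is upper triangular with $k$-th diagonal entry $\binom{k}{k}c^{0}=1$; hence its determinant is $1$. Multiplying, the determinant equals $c^{n^2}=c_k^{(\ell_k+1)^2}$, which is exactly~\eqref{big determinant}.

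There is no genuinely hard step here: the statement is a bookkeeping identity, and the only point requiring care is the index range, namely that the exponents $i$ occurring in the monomials $N^{i}e_0$ never exceed $n-1$, which is guaranteed because $j\geqslant n>n-1\geqslant i$ throughout. As an alternative route one could instead pull out the scalar $c^{j-i}$ row-by-row and column-by-column, producing the factor $c^{-\binom{n}{2}+\sum_{j=n}^{2n-1}j}=c^{n^{2}}$, and then evaluate the remaining integer determinant $\det\big[\binom{j}{i}\big]$ either as a generalized Vandermonde, since $x\mapsto\binom{x}{i}$ is a polynomial of degree $i$ with leading coefficient $1/i!$ evaluated at $x=n,n+1,\dots,2n-1$, or once more by the transfer-matrix trick above with $c=1$; I would nevertheless prefer the single unified computation.
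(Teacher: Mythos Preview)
Your proof is correct. The approach, however, differs from the paper's. The paper first strips off the powers of $c_k$ by sandwiching the matrix between two diagonal matrices, contributing the factor $c_k^{(\ell_k+1)^2}$, and then evaluates the residual integer determinant $\det\big[\binom{j}{i}\big]_{0\leqslant i\leqslant \ell_k,\ \ell_k+1\leqslant j\leqslant 2\ell_k+1}$ by repeated column subtractions using Pascal's rule, shrinking it step by step down to a $1\times 1$ matrix equal to $1$. This is essentially the ``alternative route'' you sketch at the end.

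Your main argument is more structural: you recognize the $j$-th column as $(cI+N)^j e_0$ for the subdiagonal nilpotent $N$, factor the whole matrix as $(cI+N)^n$ times the Krylov matrix $[\,e_0\ (cI+N)e_0\ \cdots\ (cI+N)^{n-1}e_0\,]$, and read off both determinants from triangularity. This avoids the inductive column reduction entirely and makes the appearance of the exponent $(\ell_k+1)^2=n^2$ transparent (it is simply $\det(cI+N)^n$). The paper's route is the barehanded one; yours packages the same Pascal identity into a single operator recursion. One small expository quibble: the sentence about ``the exponents $i$ occurring in the monomials $N^i e_0$ never exceed $n-1$, which is guaranteed because $j\geqslant n>n-1\geqslant i$'' is confusingly phrased---the relevant bound is that in the second factor one only needs $(cI+N)^k e_0$ for $k\leqslant n-1$, so the binomial expansion never reaches $N^n=0$; the original column index $j$ plays no role there.
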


That the determinant is of  the shape $O(1)\cdot 	c_k^{(\ell_k+1)^2}$ for some nonzero integer $O(1)$ is natural, and is sufficient for our purpose.

\begin{proof}
	Observe that
	\[
	{\scriptstyle
		\big[\binom{j}{i}\,c_k^{j-i}
		\big]_{\substack{	0\leqslant i\leqslant \ell_k
				\\
				m_k\leqslant j\leqslant M_k
		}}\,
		=\,
		\begin{pmatrix}
			c_k^{-0} & 0  & \cdots   & 0 & 0  \\
			0 & c_k^{-1}  & \cdots   & 0 & 0  \\
			\vdots & \vdots  & \ddots   & \vdots & \vdots \\
			0 & 0  & \cdots   & c_k^{-(\ell_k-1)} & 0  \\
			0 & 0  & \cdots   & 0 & c_k^{-\ell_k}
		\end{pmatrix}
		\cdot
		\big[\binom{j}{i}
		\big]_{\substack{	0\leqslant i\leqslant \ell_k
				\\
				m_k\leqslant j\leqslant M_k
		}}
		\cdot 
		\begin{pmatrix}
			c_k^{m_k} & 0  & \cdots   & 0 & 0  \\
			0 & c_k^{m_k+1}  & \cdots   & 0 & 0  \\
			\vdots & \vdots  & \ddots   & \vdots & \vdots \\
			0 & 0  & \cdots   & c_k^{M_k-1} & 0  \\
			0 & 0  & \cdots   & 0 & c_{k}^{M_k}
		\end{pmatrix}
	}.
	\]
	The determinants of the first and the last diagonal matrices contribute $c_k^{m_k^2}$.
	Now	we compute the middle
	$$\det
	\bigg[
	\binom{j}{i}
	\bigg]_{\substack{	0\leqslant i\leqslant \ell_k
			\\
			m_k\leqslant j\leqslant M_k
	}}\,
	=\,
	\det
	\bigg[
	\binom{j}{i}
	\bigg]_{\substack{	0\leqslant i\leqslant \ell_k
			\\
			\ell_k+1\leqslant j\leqslant 2\ell_k+1
	}}\,
	=\,
	\det\begin{pmatrix}
		\binom{\ell_k+1}{0} & \binom{\ell_k+2}{0}  & \cdots   & \binom{2\ell_k}{0} & \binom{2\ell_k+1}{0}  \\
		\binom{\ell_k+1}{1} & \binom{\ell_k+2}{1}  & \cdots   & \binom{2\ell_k}{1} & \binom{2\ell_k+1}{1}  \\
		\vdots & \vdots  & \ddots   & \vdots & \vdots \\
		\binom{\ell_k+1}{\ell_k-1} & \binom{\ell_k+2}{\ell_k-1}  & \cdots   & \binom{2\ell_k}{\ell_k-1} & \binom{2\ell_k+1}{\ell_k-1}  \\
		\binom{\ell_k+1}{\ell_k} & \binom{\ell_k+2}{\ell_k}  & \cdots   & \binom{2\ell_k}{\ell_k} & \binom{2\ell_k+1}{\ell_k}
	\end{pmatrix}.
	$$
	The trick is to use	 the  identity
	$
	\binom{j}{i}-\binom{j-1}{i}=\binom{j-1}{i-1}
	$ for $j\geqslant i\geqslant 1$. 
	Subtracting 
	 the $s$-th column from the $(s+1)$-th column
	subsequently for $s=\ell_k-1, \ell_{k}-2, \dots, 1$, we receive 
	
	\begin{align*}
		\det\bigg[\binom{j}{i}
		\bigg]_{\substack{	0\leqslant i\leqslant \ell_k
				\\
				\ell_k+1\leqslant j\leqslant 2\ell_k+1
		}}
		=\,
		&\det\begin{pmatrix}
			1 & 0  & \cdots   & 0 & 0  \\
			\binom{\ell_k+1}{0} & \binom{\ell_k+1}{0}  & \cdots   & \binom{2\ell_k-1}{0} & \binom{2\ell_k}{0}  \\
			\vdots & \vdots  & \ddots   & \vdots & \vdots \\
			\binom{\ell_k}{\ell_k-2} & \binom{\ell_k+1}{\ell_k-2}  & \cdots   & \binom{2\ell_k-1}{\ell_k-2} & \binom{2\ell_k}{\ell_k-2}  \\
			\binom{\ell_k}{\ell_k-1} & \binom{\ell_k+1}{\ell_k-1}  & \cdots   & \binom{2\ell_k-1}{\ell_k-1} & \binom{2\ell_k}{\ell_k-1}
		\end{pmatrix}\,
		=\,
		\det\bigg[\binom{j}{i}
		\bigg]_{\substack{	0\leqslant i\leqslant \ell_k-1
				\\
				\ell_k+1\leqslant j\leqslant 2\ell_k
		}}.
	\end{align*}
	This is the pattern for induction process. Eventually the outcome is $\det [\binom{\ell_k+1}{0}]=1$.
\end{proof}

\begin{obs}
\label{key observation 3.2}
	In~\eqref{hat-a_k_i},~\eqref{expresion of hat a},~\eqref{system of linear equations} the polynomials $\hat{a}_{k, i}\in \mathbb{C}[c_k]$
	have degrees $\leqslant \ell_k-i$ for $0\leqslant i\leqslant \ell_k$.
	Hence
	by Cramer's rule,
	the solutions $a_j$ of~\eqref{system of linear equations} have shape
	\[
	a_j\,\in\, \frac{1}{c_k^{j-m_k+1}}\cdot \mathbb{C}
	\Big[\frac{1}{c_k}
	\Big]
	\qquad
	{\scriptstyle
		(m_k\,\leqslant\, j\,\leqslant\,M_{k})}.
	\]
	Therefore,  coefficients  of
	$(\mathsf{h.o.t.})$  in~\eqref{h.o.t.}
	are in
	$	\frac{1}{c_k}\cdot \mathbb{C}
	[\frac{1}{c_k}
	]$, whence~\eqref{magic} is satisfied.
\end{obs}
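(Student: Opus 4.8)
The plan is to verify, in order, the three assertions of the Observation --- the degree bound on the $\hat a_{k,i}$, the precise shape of the solutions $a_j$, and the consequence for the higher-order terms in~\eqref{h.o.t.} (hence~\eqref{magic}) --- the middle one being where all the work lies. For the degree bound: once the Algorithm reaches Step~$k$, the coefficients $a_j$ occurring in $S_0+\cdots+S_{k-1}$, namely those with $j\leqslant M_{k-1}$, are already fixed complex numbers (chosen in the earlier steps). Hence in~\eqref{expresion of hat a} each $\hat a_{k,i}$ is genuinely a polynomial in the single variable $c_k$: the sum there has $c_k$-degree $\leqslant M_{k-1}-i$ and the remaining term $-g_k^{(i)}(0)/i!$ is constant, so $\deg_{c_k}\hat a_{k,i}\leqslant\ell_k-i$, using $M_{k-1}\leqslant\ell_k$ and $i\leqslant\ell_k$.

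For the shape of the $a_j$, rather than expanding cofactors by hand I would reuse the factorisation from the proof of the Lemma: the coefficient matrix of~\eqref{system of linear equations} is $D_1\,B\,D_2$, with $D_1=\mathrm{diag}(1,c_k^{-1},\dots,c_k^{-\ell_k})$, $B=\big[\binom ji\big]$ a \emph{constant} matrix of determinant $1$, and $D_2=\mathrm{diag}(c_k^{\,m_k},\dots,c_k^{\,M_k})$. Then~\eqref{system of linear equations} rewrites as $D_2\,(a_j)_j=-\,B^{-1}D_1^{-1}(\hat a_{k,i})_i$. The $i$-th entry of $D_1^{-1}(\hat a_{k,i})_i$ is $c_k^{\,i}\hat a_{k,i}$, of $c_k$-degree $\leqslant\ell_k$ by the bound above, and since $B^{-1}$ has constant entries every entry of the right-hand side has $c_k$-degree $\leqslant\ell_k$. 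Hence $c_k^{\,j}a_j$ is a polynomial in $c_k$ of degree $\leqslant\ell_k$, so --- since $m_k=\ell_k+1$ --- the highest power of $c_k$ occurring in $a_j$ is $\leqslant\ell_k-j=-(j-m_k+1)<0$, i.e.\ $a_j\in\frac1{c_k^{\,j-m_k+1}}\CC\big[\frac1{c_k}\big]$; in particular~\eqref{a_j shape} holds. (Equivalently, Cramer's rule together with~\eqref{big determinant} gives the same: replacing one column of the coefficient matrix by $(-\hat a_{k,i})_i$ lowers the $c_k$-degree of the determinant from $(\ell_k+1)^2$ to $\leqslant(\ell_k+1)^2+\ell_k-j$, and one divides by $c_k^{(\ell_k+1)^2}$.)

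Substituting this shape of $a_j$ into~\eqref{h.o.t.}: for $i\geqslant\ell_k+1$ the coefficient of $z^i$ in $(\mathsf{h.o.t.})$ is $\sum_{j=m_k}^{M_k}\binom ji\,a_j\,c_k^{\,j-i}$, and each summand has highest $c_k$-power $\leqslant(\ell_k-j)+(j-i)=\ell_k-i\leqslant-1$, hence lies in $\frac1{c_k}\CC[\frac1{c_k}]$. Combining~\eqref{hat-a_k_i}, \eqref{h.o.t.} and~\eqref{system of linear equations}, the $z^i$-terms with $i\leqslant\ell_k$ in $G_k(c_k+z)-g_k(z)$ cancel identically, so $G_k(c_k+z)-g_k(z)=(\mathsf{h.o.t.})$, which is~\eqref{magic}. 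Finally, as all $a_j$ and hence all coefficients of $G_k(c_k+z)-g_k(z)$ tend to $0$ when $|c_k|\to\infty$, specialising $|c_k|\gg1$ simultaneously secures~\eqref{small a_j} (whence~\eqref{condition 1}) and the proximity~\eqref{condition 2}, so Step~$k$ of the Algorithm goes through.

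The crux --- the genuine content of the Observation --- is the \emph{order} of vanishing of $a_j$: the weak statement $a_j\to0$ as $c_k\to\infty$ would not suffice, because for $i$ near $\ell_k+1$ and $j$ near $M_k$ the product $a_j\,c_k^{\,j-i}$ could then still carry a positive power of $c_k$ and spoil~\eqref{magic}. It is precisely the sharp estimate $a_j=O\big(c_k^{-(j-m_k+1)}\big)$, furnished by the clean determinant value $c_k^{(\ell_k+1)^2}$, that forces every higher-order coefficient into $\frac1{c_k}\CC[\frac1{c_k}]$.
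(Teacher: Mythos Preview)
Your proof is correct and follows essentially the same strategy as the paper: bound $\deg_{c_k}\hat a_{k,i}$, then track $c_k$-degrees through the linear system to get the pole order of $a_j$, then feed this into the $(\mathsf{h.o.t.})$ coefficients. The only difference is in execution of the middle step: the paper applies Cramer's rule directly and uses that every term in the Laplace expansion of~\eqref{big determinant} has the same $c_k$-degree, whereas you invert the system via the factorisation $D_1BD_2$ from the Lemma (and then mention the Cramer route parenthetically); both yield the same bound $a_j\in c_k^{-(j-m_k+1)}\mathbb{C}[1/c_k]$. Your handling of the degree bound is in fact slightly more careful than the paper's, since you separate the constant term $-g_k^{(i)}(0)/i!$ and use $i\leqslant\ell_k$ to cover the case $M_{k-1}<i\leqslant\deg g_k$.
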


\begin{proof}
By~\eqref{expresion of hat a},
$\hat{a}_{k, i}$ is a polynomial of variable $c_{k}$
with degree $\leqslant M_{k-1}-i$.
Note that $\binom{j}{i}\,c_k^{j-i}$ is of degree $j-i$.
Observe that 
each term in the Laplace expansion of the determinant~\eqref{big determinant} is of the same degree with respect to variable $c_k$.
By Cramer's rule,
we see that
the pole order of $a_j$ with respect to variable $c_k$
is at least 
$ (j-i)-(M_{k-1}-i)=j-M_{k-1}\geqslant j-\ell_k=j-(m_k-1).$
\end{proof}

Summarizing, for every Step $k=1, 2, 3, \dots$, by solving the system of linear equations~\eqref{system of linear equations}, and
by specializing sufficiently large $|c_k|\gg 1$, 
we can obtain a desired  $S_k$
to move on in the Algorithm. In the limit we receive some universal entire functions $f=\sum_{i=0}^{\infty} S_i$
within admissible slow growth $\phi$.

\begin{rmk}
    The trick of choosing $c_k$ far away from the origin is not only sufficient but also necessary, by reason of Nevanlinna theory (see~\eqref{T+O(1)} below), since
    $g_k$ might contain too many zeros in $\mathbb{D}_{\hat{r}_k}$.
\end{rmk}

By improving our algorithm, we can make $f$ to be hypercyclic  simultaneously
for given countable nontrivial translations $\{\mathsf{T}_{a_i}\}_{i\geqslant 1}$. 

Moreover, our Algorithm is flexible
for higher dimensional situation, as we now explain.

\subsection{General case}

Given any transcendental growth function $\phi$ of the shape~\eqref{admissible analytic function},
and given
countably many  directions $\{\theta_i\}_{i\geqslant 1}$ in the unit sphere of
$\mathbb{C}^n$,  we now upgrade  our {\sl Algorithm} to prove Theorem~A, by constructing
universal entire holomorphic maps
$F\in \mathsf{Hol}(\mathbb{C}^n, \mathbb{C}^m)$ with slow growth~\eqref{desired growth in several variables}
such that  $F$ are hypercyclic for all nontrivial translations 
$\mathsf{T}_{\theta_i}$ for $i
\geqslant 1$.

We start with the countable set
\begin{equation}
\label{countable pair}
	\Big(
	\oplus_{j=1}^m\,
	\mathbb{Q}_{c}[z_1, \dots, z_n]
	\Big)\,
	\times\,
	\mathbb{Q}_+,
\end{equation}
in which we enumerate all  elements as
$
\big((f_{i}^{[j]})_{1\leqslant j\leqslant m}, r_i\big)_{i\geqslant 1}$. Using again the trick~\eqref{repeats infinitely many times},
we make another sequence
$
\big((g_{i}^{[j]})_{1\leqslant j\leqslant m}, \hat{r}_i\big)_{i\geqslant 1}$  by declaring 
\begin{equation}
	\label{define g_j}
	g_{i}^{[j]}:=f_{\varphi_1\circ\varphi_1(i)}^{[j]}\,;
	\qquad
	\hat{r}_i
	:=
	r_{\varphi_1\circ\varphi_1(i)}
	\qquad
	{\scriptstyle
		(j\,=\,1,\,2,\, \dots,\,m;\,\, i\,\geqslant \,1)}.
\end{equation}

Our {\sl Algorithm} produces  
universal holomorphic maps \begin{equation}
	\label{cn to cm}
	F=
	(F^{[j]})_{1\leqslant j\leqslant m}: \mathbb{C}^n\longrightarrow \mathbb{C}^m
\end{equation}
by induction,
where each coordinate component
\begin{equation}
	\label{desired F in the main theorem}
	F^{[j]}:=S_0^{[j]}+S_1^{[j]}+S_2^{[j]}+S_3^{[j]}+\cdots
	\qquad
	{\scriptstyle
		(j\,=\,1,\,2,\, \dots,\,m)}.
\end{equation}
In {\em Step $k$} for $k=0, 1, 2, \dots$ we fabricate
\begin{equation}
	\label{S_k CN}
	S_k^{[j]}
	:=
	\sum_{j=m_k}^{M_k}
	\sum_{|I|=j}\,
	a_I^{[j]}\,Z^I
	\qquad
	{\scriptstyle
		(j\,=\,1,\,2,\, \dots,\,m)}.
\end{equation}
with sufficiently small coefficients 
$a_I^{[j]}\in \mathbb{C}$,
so that the desired estimate~\eqref{desired growth in several variables} holds automatically.
Here we use standard notation of 
multi-indices
$I=(i_1, \dots, i_n)\in \mathbb{Z}_{\geqslant 0}^{n}$ with norm $|I|=i_1+\cdots+i_n$,
and 
$Z^I=z_1^{i_1}\cdots z_n^{i_n}$, and likewise we require
$
0=m_0=M_0<
m_1\leqslant M_1 <m_{2}\leqslant M_{2}<\cdots$.

Fix small positive error bounds $\{\epsilon_i\}_{i\geqslant 1}$ as~\eqref{sum of epsilons is almost 0}.
Set the initial  key data 
\[
S_0^{[1]}=\cdots=S_0^{[m]}=\mathbf{0}\,
\in\,
\mathbb{C}[z_1, \dots, z_n], 
\quad
R_0=2023\,\in\, \mathbb{R}_+,
\quad c_0=\mathsf{0}\,\in\, \mathbb{C}^n.
\]
In each Step $k=1, 2, 3, \dots$, we first pick some center 
\begin{equation}
	\label{trick n_k}
	c_k\,\in\, \mathbb{Z}_{+}\cdot \theta_{\varphi_2\circ\varphi_1(k)}
\end{equation} 
far away from  $\mathbb{B}_{R_{k-1}}:=\{
||z||\leqslant R_{k-1}
\}$ in $\mathbb{C}^n$. Next, we fabricate
some $S_k^{[j]}$ of the shape~\eqref{S_k CN}  such that:

\begin{itemize}
	\smallskip
	\item[(1)]
	on the ball $\mathbb{B}_{R_{k-1}}$ the polynomials $S_k^{[j]}$ are negligible 
	\begin{equation}
		\label{condition 1, 2}
		|S_k^{[j]}(z)|\leqslant \epsilon_k 
		\qquad
		{\scriptstyle
			(j\,=\,1,\,2,\, \dots,\,m;\,\,\forall\,z\,\in\, \mathbb{B}_{R_{k-1}})};
	\end{equation}
	
	\smallskip
	\item[(2)]
	the partial sums
	$G_k^{[j]}:=(S_0^{[j]}+S_1^{[j]}+\cdots+S_{k-1}^{[j]})+S_k^{[j]}$ 
	approximates 
	$\mathsf{T}_{-c_k}\,g_k^{[j]}$ on $\mathbb{B}(c_k, \hat{r}_k)$ within small error $\epsilon_k$, i.e.,
	\begin{equation}
		\label{condition 2, 2}
		|G_k^{[j]}(c_k+z)-g_k^{[j]}(z)|\leqslant \epsilon_k
		\qquad
		\qquad
		{\scriptstyle
			(j\,=\,1,\,2,\, \dots,\,m;\,\,\forall\,z\,\in\, \mathbb{B}_{\hat{r}_k})}.
	\end{equation}
\end{itemize}
We end Step $k$ by setting  large radius
$R_k>||c_k||+\hat{r}_k$,
and then move on to Step $k+1$.
 
\bigskip

As before, the restriction~\eqref{condition 1, 2} can easily be satisfied by requiring  all coefficients of $S_k^{[j]}$ to be sufficiently small.
The main trouble comes from~\eqref{condition 2, 2}, which we now handle by the same method.

Complete the unit vector $v_1:=\theta_{\varphi_2\circ \varphi_1(k)}$ to an orthonormal basis 
$
\{v_1, v_2, \dots, v_n\}
$ of $\mathbb{C}^n$.
Define a new linear coordinate system
$(w_1, \dots, w_n)$ for $\mathbb{C}^n$
in which $v_{\ell}$ reads as 
$(0,\dots, 0, 1, 0, \dots, 0)$ where $1$ appears in the ${\ell}$-th position, for ${\ell}=1, \dots, n$.
Rewrite $c_k=n_k\cdot v_1$ where $n_k\in \mathbb{Z}_+$. The idea is,  regarding $n_k$ as a new variable, 
up to changing coordinates linearly
between
$(z_1, \dots, z_n)$ and
$
(w_1, \dots, w_n)
$, 
to construct polynomial 
\begin{equation}
\label{S_k trick}
S_k^{[j]}
=
\sum_{\ell=m_k}^{M_k}\,
a_{\ell}^{[j]}\,w_1^{\ell}
\qquad
{\scriptstyle
	(j\,=\,1,\,2,\, \dots,\,m)}
\end{equation} 
with respect to variable $w_1$ having coefficients
\begin{equation}
\label{a_j shape 2}
a_{\ell}^{[j]}\,\in\, \frac{1}{n_k}\cdot \mathbb{C}[w_2, \dots, w_{n}]
\Big[\frac{1}{n_k}
\Big]
\qquad
{\scriptstyle
	(j\,=\,1,\,2,\, \dots,\,m;\,\,m_k\,\leqslant\, {\ell}\,\leqslant\,M_{k})},
\end{equation} 
so that~\eqref{condition 1, 2} holds by specializing $n_k\gg 1$. 
Similarly, we can fix~\eqref{condition 2, 2} by showing that each polynomial
$G_k^{[j]}(c_k+z)-g_k^{[j]}(z)$,
after changing variables,
is a polynomial of variable $w_1$ 
\begin{equation}
\label{magic 2}
\text{having coefficients in } \frac{1}{n_k}\cdot \mathbb{C}[w_2, \dots, w_{n}]
\Big[\frac{1}{n_k}
\Big].
\end{equation}

Firstly, by changing coordinates and by gathering binomial expansions, we rewrite
\begin{equation}
\label{hat-a_k_i 2}
(S_0^{[j]}+S_1^{[j]}+\cdots+S_{k-1}^{[j]})(c_k+z)-g_k^{[j]}(z)\,
=\,\sum_{i=0}^{\ell_{k}}\, \hat{a}_{k, i}^{[j]}\, w_1^i
\qquad
{\scriptstyle
	(j\,=\,1,\,2,\, \dots,\,m)},
\end{equation}
where $\ell_k:=\max_{1\leqslant j\leqslant m}\{M_{k-1}, \deg g_k^{[j]}\}$,
and where the coefficients 
\begin{equation}
\label{expresion of hat a 2}
\hat{a}_{k, i}^{[j]}\,
\in\,
\mathbb{C}[w_2, \dots, w_{n}]
[n_k]
\qquad
{\scriptstyle
	(j\,=\,1,\,2,\, \dots,\,m;\,\,i\,=\,0,\,1,\,2,\, \dots,\,\ell_k)}.
\end{equation}

Next, by~\eqref{S_k trick} we rewrite
\begin{equation}
\label{h.o.t. 2}
S_k^{[j]}(z+c_k)\,
=\,
\sum_{\ell=m_k}^{M_k}\,
a_{\ell}^{[j]}\,(w_1+n_k)^{\ell}\,
=\,
\sum_{i=0}^{\ell_k}
\Big[
\sum_{\ell=m_k}^{M_k}\,
a_{\ell}^{[j]}
\,
\binom{\ell}{i}\,n_k^{\ell-i}
\Big] 
\cdot
w_1^i
+(\mathsf{h.o.t.})
\qquad
{\scriptstyle
	(j\,=\,1,\,2,\, \dots,\,m)}
\end{equation}
where $(\mathsf{h.o.t})$ are the remaining higher order terms with respect to variable $w_1$.

Declare
$
m_k:=\ell_{k}+1$,
$M_k:=2\ell_{k}+1$.
By solving the linear equations 
\begin{equation}
\label{system of linear equations 2}
\sum_{\ell=m_k}^{M_k}\,
a_{\ell}^{[j]}
\,
\binom{\ell}{i}\,n_k^{\ell-i}\,
=\,
-\,\hat{a}_{k, i}^{[j]}
\qquad
{\scriptstyle
	(j\,=\,1,\,2,\, \dots,\,m;\,\,i\,=\,0,\,1,\,2,\, \dots,\,\ell_k)},
\end{equation}
we obtain the desired $S_k^{[j]}$ as  polynomials of variable $w_1$ with coefficients 
in
\[
\frac{1}{n_k}\cdot \mathbb{C}[w_2, \dots, w_{n}]
\Big[\frac{1}{n_k}
\Big][w_1].
\]
By mimicking the argument of Observation~\ref{key observation 3.2},
we can verify~\eqref{magic 2}. Lastly,
by specializing $n_k\gg 1$ and by changing 
coordinates, 
we recover $S_k^{[j]}$ in variables $z_1, \dots, z_n$ satisfying 
both~\eqref{condition 1, 2} and~\eqref{condition 2, 2}.

\section{\bf Proofs}
\label{section: proofs}
\subsection{Proof of Lemma~\ref{all-in hypercyclic lemma}}
We follow essentially the strategy of~\cite[Theorem 8]{amazing-theorem}.

Our goal is to show that,
for any holomorphic map $g: \overline{\mathbb{B}}(0, r)\rightarrow X$ defined in some open neighborhood of the closed ball $$\overline{\mathbb{B}}(0, r):=\{||z||<r\}\subset \mathbb{C}^n,$$ for any positive error bound $\epsilon>0$,  we can find some positive integer $m$ such that $\mathsf{T}_{b}^{(m)} (f)$ approximates $g$ on $\overline{\mathbb{B}}(0, r)$ up to error $\epsilon$ with respect to a fixed distance $d_X$ on $X$
\begin{equation}
\label{approx g goal}
\max_{z\in \overline{\mathbb{B}}(0,r)}d_X(f(z+m\cdot b), g(z))<\epsilon.
\end{equation}

Of course we must use the condition that $f$ is hypercyclic for $\mathsf{T}_a$. A natural idea is to start with a sequence of  increasing positive integers $\{n_i\}_{i\geqslant 1}$ such that $\{f(\bullet+n_i\cdot a)\}_{i\geqslant 1}$ approximate $g(\bullet)$ near $\overline{\mathbb{B}}(0, r)$. If moreover $\{n_i\}_{i\geqslant 1}$ has a subsequence $\{n_{k_i}\}_{i\geqslant 1}$ such that 
\begin{equation}
\label{arithmetic property}
n_{k_i}\cdot a \approx 0
\quad \text{in}\quad
\mathbb{R}\cdot b/ \mathbb{Z}\cdot b,
\end{equation} 
then by the
continuity of $g$ near $\overline{\mathbb{B}}(0, r)$ 
we are done. However, due to the lack of control on $\{n_i\}_{i\geqslant 1}$, such approach does not work.

The trick in the proof of~\cite[Theorem 8]{amazing-theorem} is that, instead of approximating $g$ by orbit of $f$ under $\mathsf{T}_a$, one constructs certain thoughtful $\hat{g}$ so that an analogue of~\eqref{arithmetic property} holds automatically.

Take a small positive $\delta\ll 1$ such that $g$ is defined 
on $\overline{\mathbb{B}}(0, r+\delta)$ and 
uniformly continuous
\begin{equation}
\label{g uniformly continuous}
d_X(g(z_1), g(z_2))<\epsilon/2023
\qquad
{\scriptstyle
	(\forall \,
	z_1,\, z_2 \,\in 
	\overline{\mathbb{B}}(0, r+\delta)\,\text{with}\,
	||z_1-z_2||\,<\,\delta).
}
\end{equation}
Choose sufficiently many residue classes $\{[c_i]\}_{i=1}^k$ in $\mathbb{R}\cdot b/ \mathbb{Z}\cdot b$ such that
\begin{equation}
\label{condition on ball centers c_k}
\text{	$\{[c_i]\}_{i=1}^k  $ are $\delta$-dense,}
\end{equation} 
i.e. any vector $c\in \mathbb{R}\cdot b$ can be approximated by a representative $c'_i$
of some  class $[c_i]$ within tiny distance $||c-c_i'||<\delta$.
This can be achieved by requiring $k>||b||/2\delta$. 
Next,  for $i=1, 2, \dots, k$, we subsequently 
select some representatives $\hat{c}_i\in \mathbb{R}\cdot b$ of  $[c_i]$
far away from each other so that the closed balls
$\overline{\mathbb{B}}(\hat{c}_i, r+\delta)$ are pairwise disjoint.
Therefore $\cup_{i=1}^k \overline{\mathbb{B}}(\hat{c}_i, r+\delta)$ is polynomial convex~\cite{MR889199}. One can also replace balls with cubes to avoid using~\cite{MR889199}.

Now we define a holomorphic function $g_2$ 
on a neighborhood of $\cup_{i=1}^k \overline{\mathbb{B}}(\hat{c}_i, r+\delta)$ by translations of $g$
\begin{equation}
\label{define g_2}
g_2\restriction_{\overline{\mathbb{B}}(\hat{c}_i, r+\delta)}
(\bullet)
:=
g(\bullet-\hat{c}_i)
\qquad
{\scriptstyle
	(i\,=\,1,\,2,\, \dots,\,k)}.
\end{equation}
Since $X$ is Oka, by the BOPA property (\cite[p.~235]{Forstneric-Oka-book}), we can ``extend'' $g_2$ to a holomorphic map $\hat{g}$ from a neighborhood of large ball $\overline{\mathbb{B}}(0, R)$ with radius
$
R>\max_{1\leqslant i\leqslant k}
\{||\hat{c}_i||\}+r+\delta
$
to $X$ up to small deviation
\begin{equation}
\label{g_2 near g}
d_X(\hat{g}(z), g_2(z))
<
\epsilon/2023
\qquad
{\scriptstyle
	(\forall\, z\,\in\, \cup_{i=1}^k\, \overline{\mathbb{B}}(\hat{c}_i, r+\delta))}.
\end{equation}

Now we take a new sequence of increasing positive integers $\{n_j\}_{j\geqslant 1}$ such that $\{\mathsf{T}_a^{(n_j)}(f)\}_{j\geqslant 1}$ converges to $\hat{g}$ on $\overline{\mathbb{B}}(0, R)$, and that in particular
\begin{equation}
\label{f near g}
\max_{z\in \overline{\mathbb{B}}(0, R)}\,
d_X\,
(f(n_j\cdot a+z), \hat{g}(z))
<
\epsilon/2023
\qquad
{\scriptstyle
	(\forall\, j\,\geqslant\, 1)}.
\end{equation}

For each $j\geqslant 1$,
note that at least one vector in $\{n_j\cdot a+\hat{c}_i\}_{i=1}^k$ is near to $0+\mathbb{Z}\cdot b$ within distance $\delta$,
since the  segment between $- n_j\cdot a-\delta\cdot \frac{b}{||b||}$ and $- n_j\cdot a+\delta\cdot \frac{b}{||b||}$ cannot avoid all vectors in $\{\hat{c}_i+\mathbb{Z}\cdot b\}_{1\leqslant i\leqslant  k}$ by our construction~\eqref{condition on ball centers c_k}. Hence we can rewrite 
\begin{equation}
    \label{basic arithmetic}
n_j\cdot a+\hat{c}_i =m_j\cdot b+\delta’
\end{equation}
for one  $i\in \{1, 2, \dots, k\}$, $m_j\in \mathbb{Z}$ and some residue $||\delta’||<\delta$.

Summarizing~\eqref{g uniformly continuous}, \eqref{define g_2}, \eqref{g_2 near g},
\eqref{f near g},
for any $z$ in $\overline{\mathbb{B}}(0, r)$, we obtain
\begin{align*}
\mathsf{T}_b^{(m_j)}f(z)
&=
f(m_j\cdot b+z)
\\
\text{[from \eqref{basic arithmetic}]}\qquad
&=
f(n_j\cdot a+\hat{c}_i-\delta'+z)
\\
\text{[by~\eqref{f near g}]}
\qquad
&
\approx 
\hat{g}(\hat{c}_i-\delta'+z)
\\
\text{[use~\eqref{g_2 near g}]}
\qquad
&
\approx 
g_2(\hat{c}_i-\delta'+z)
\\
\text{[see~\eqref{define g_2}]}
\qquad
&
=
g(-\delta'+z)
\\
\text{[check~\eqref{g uniformly continuous}]}
\qquad
&
\approx 
g(z).
\end{align*}
Thus the estimate~\eqref{approx g goal} is guaranteed by adding up small differences in the above three ``$\approx$''.
\qed

\subsection{Proof of Theorem~A}
 We now show that
for every $\theta_p$, by our Algorithm, 
any obtained $F$ in~\eqref{cn to cm} is hypercyclic for $\mathsf{T}_{\theta_p}$.
Indeed,  for any pair $\big(f_i=(f_{i}^{[j]})_{1\leqslant j\leqslant m}, r_i\big)$ in~\eqref{countable pair}, for any $\epsilon>0$, we will show that for some integer $n\geqslant  1$ there holds
\begin{equation}
\label{proof requirement of main theorem}
||F(z+n\cdot \theta_p)-f_i(z)||
\leqslant
\epsilon
	\qquad
{\scriptstyle
	(\forall\, z\,\in \, \mathbb{B}(r_i))
}
\end{equation}

By $\{\epsilon_{\ell}\}_{\ell\geqslant 1}$ in~\eqref{sum of epsilons is almost 0},
we can find a large $M$ such that $\sum_{\ell\geqslant M}\epsilon_\ell < \epsilon$.
Recalling~\eqref{define g_j} and~\eqref{trick n_k},
we now select a step number $k>M$ such that 
$
\varphi_1\circ \varphi_1(k)=i$ and $
\varphi_2\circ\varphi_1(k)=p$. 
It is easy to check that $n=n_k$ satisfies the proximity requirement~\eqref{proof requirement of main theorem} by keeping track of the Algorithm.

Lastly, by Lemma~\ref{all-in hypercyclic lemma},
$F$ is hypercyclic for all nontrivial translations 
$\mathsf{T}_a$ where $a\in \mathbb{R}_+\cdot \theta_p$.
\qed

\subsection{Proof of Theorem~B}
The idea is to choose a holomorphic function $F: \mathbb{C}^n\rightarrow \mathbb{C}^{m+1}$ such that:
\begin{itemize}
\smallskip
\item[$\diamond \,1.$]
$F$ has sufficiently slow growth and is hypercyclic for translation operators $\{\mathsf{T}_{\theta_i}\}_{i\geqslant 1}$;

\smallskip
\item[$\diamond \, 2.$]
$F(\mathbb{C}^n)$ avoids the origin
$\mathbf{0}\in \mathbb{C}^{m+1}$.
\end{itemize}
It is clear that $f:=\pi_m\circ F$ satisfies the required hypercyclicity in Theorem~B, where $\pi_m$ is the canonical projection from
$\mathbb{C}^{m+1}\setminus \{\mathbf{0}\}$ to 
$\mathbb{CP}^m$.

The existence of such $F$ is guaranteed by Theorem~A and the following 

\begin{obs}\label{not surjective}
If $m\geqslant n$, then
for any holomorphic map $h:\mathbb{C}^n\rightarrow \mathbb{C}^{m+1}$,
by Sard's theorem, the image $h(\mathbb{C}^n)$ has zero Lebesgue measure in $\mathbb{C}^{m+1}$.
\qed
\end{obs}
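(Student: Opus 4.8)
The plan is to reduce the claim to the elementary measure‑theoretic fact that a $C^{1}$ map from a manifold of real dimension $d$ into a manifold of strictly larger real dimension $D$ has image of Lebesgue measure zero. First I would regard the holomorphic map $h\colon\mathbb{C}^{n}\rightarrow\mathbb{C}^{m+1}$ as a real‑analytic, in particular $C^{\infty}$, map $h_{\mathbb{R}}\colon\mathbb{R}^{2n}\rightarrow\mathbb{R}^{2m+2}$. The hypothesis $m\geqslant n$ then gives $2n\leqslant 2m<2m+2$, so the real dimension of the source is strictly smaller than that of the target; this is the only place where $m\geqslant n$ is used, and it is essential, since a holomorphic map $\mathbb{C}^{n}\rightarrow\mathbb{C}^{n}$ may well be surjective.

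Next I would invoke Sard's theorem in its degenerate form. Because $\rank\, d h_{\mathbb{R}}(x)\leqslant 2n<2m+2$ at every $x\in\mathbb{R}^{2n}$, every point of the source is a critical point and hence every point of $h(\mathbb{C}^{n})$ is a critical value; Sard's theorem then says that the set of critical values, which here is all of $h(\mathbb{C}^{n})$, has zero Lebesgue measure in $\mathbb{R}^{2m+2}\cong\mathbb{C}^{m+1}$. If one wishes to bypass Sard, the same conclusion follows by a direct covering argument: write $\mathbb{C}^{n}=\bigcup_{\ell\geqslant 1}Q_{\ell}$ with $Q_{\ell}$ closed cubes, note that $h_{\mathbb{R}}$ is Lipschitz on each $Q_{\ell}$, so $h_{\mathbb{R}}(Q_{\ell})$ has finite $2n$‑dimensional Hausdorff measure and therefore vanishing $(2m+2)$‑dimensional Lebesgue measure, and conclude by countable additivity.

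I do not expect any genuine obstacle here: the proof is a dimension count followed by a standard citation, the only subtlety being the bookkeeping $\dim_{\mathbb{R}}\mathbb{C}^{n}=2n<2m+2=\dim_{\mathbb{R}}\mathbb{C}^{m+1}$. To connect this with Theorem~B one then applies Theorem~A to produce a universal slow‑growth $F\colon\mathbb{C}^{n}\rightarrow\mathbb{C}^{m+1}$; since $F(\mathbb{C}^{n})$ is a null set it omits some point $p$, and replacing $F$ by $F-p$ (which is still universal, and still of admissible slow growth since the transcendental $\phi$ dominates constants) yields a map avoiding $\mathbf{0}$, i.e.\ property $\diamond\,2$, so that $\pi_{m}\circ(F-p)$ is the desired universal map into $\mathbb{CP}^{m}$.
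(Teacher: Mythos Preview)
Your proposal is correct and matches the paper's approach exactly: the paper treats this observation as immediate from Sard's theorem (it carries only a \qed), and your explanation simply unpacks that citation by noting $2n<2m+2$ forces every point to be critical. Your additional covering/Hausdorff argument and the sketch of how the observation feeds into Theorem~B are also accurate and align with what the paper does next.
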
 

Later, we will take some
universal holomorphic function $\tilde{F}:\mathbb{C}^n\rightarrow \mathbb{C}^{m+1}$ with slow growth, which is  hypercyclic for all $\{\mathsf{T}_{\theta_i}\}_{i\geqslant 1}$,
and choose certain $v\notin \tilde{F}(\mathbb{C}^n)$, so that
$F:=\tilde{F}-v$ satisfies our requirements $\diamond\, 1$ and $\diamond\, 2$ above.
The remaining difficulty  is to make sure~\eqref{admissible-slow-growth cpn}.

\medskip
We need some insight from higher dimensional Nevanlinna theory
for constructing such $F$. 

By homogeneity, the angular form
$
\gamma
:=
\dif^c\log ||z||^2\wedge (\dif \dif^c\log ||z||^2)^{n-1}
$
contributes constant integral
\begin{equation}
\label{constant angular form integral}
\int_{||z||=r} \gamma
=
1	\qquad
{\scriptstyle
	(\forall\, r\,> \, 0)
}
\end{equation}
on each sphere $\{||z||=r\}$. 
By Jensen's formula \cite[Lemma 2.1.33]{Noguchi-Winkelmann-book}, the
characteristic function of $f:=\pi_m\circ F$
can be rephrased (recall $\alpha=\dif \dif^c ||z||^2$)
\begin{equation}\label{high dimension order function 1}
\begin{aligned}
	T_f(r)
	=&
	\int_{1}^r
	\frac{\dif t}{t^{2n-1}}
	\int_{\mathbb{B}(t)}
	\dif \dif^c\log(\sum_{i=0}^{m} |F_i|^2)\wedge \alpha^{n-1}\\
	=&\int_{||z||=r}\log (\sum_{j=0}^m|F_j(z)|^2)^{\frac{1}{2}}\,\gamma(z)-\int_{||z||=1}\log (\sum_{j=0}^m|F_j(z)|^2)^{\frac{1}{2}}\,\gamma(z).
\end{aligned}
\end{equation}

Define $\phi: \mathbb{R}_+\rightarrow \mathbb{R}_+$ to be constantly $1$ on the interval $(0, 1)$ and 
\begin{equation}
\label{define phi(r) by psi}
\phi(r):= r^{\frac{\psi(r)}{2}}
\qquad
{\scriptstyle
	(\forall\,r\,\geqslant\, 1).
}
\end{equation} 
Then $\phi$ is  continuous and nondecreasing.
By Theorem~A, given $\{\theta_i\}_{i\geqslant 1}\subset\mathbb{S}^{2n-1}$, there is some universal holomorphic map $\tilde{F}:\mathbb{C}^n\rightarrow\mathbb{C}^{m+1}$ with slow growth
\begin{equation}\label{estimate1}
||\tilde{F}(z)||\leqslant
\frac{\phi(||z||)}{2^k}
\qquad
\qquad
{\scriptstyle
	(\forall\,z\,\in\, \mathbb{C}^n),
}
\end{equation}
such that $\tilde{F}$ is
hypercyclic for all  translations 
$\{\mathsf{T}_{\theta_i}\}_{i\geqslant 1}$. Here,
the value $k\gg 1$ is to be determined.

By Observation \ref{not surjective}, we can find a vector  $v\in \mathbb{C}^{m+1}\setminus  \tilde{F}(\mathbb{C}^n)$ in the open region
\begin{equation}\label{estimate v}
1-\frac{1}{2^{k-1}}<
||v||<1-\frac{1}{2^{k}}.
\end{equation}
Set $F(z):=\tilde{F}(z)-v$. It is clear that $F$ is
hypercyclic for all $\{\mathsf{T}_{\theta_i}\}_{i\geqslant 1}$. By~\eqref{estimate1},  \eqref{estimate v}, and by $\phi(r)\geqslant 1$ for $r\geqslant 1$, we have
\begin{equation}\label{estimate F}
||F(z)||\leqslant
\frac{\phi(||z||)}{2^k}+1-\frac{1}{2^k}
\leqslant
\phi(||z||)
\qquad
{\scriptstyle
	(\forall\,|z|\,\geqslant\, 1).
}
\end{equation}
In particular
\begin{equation}
\label{upper bound of F on unit sphere}
\sup_{||z||=1} ||F(z)|| 
\leqslant
\phi(1)=1.
\end{equation}
Applying \eqref{estimate1} and \eqref{estimate v} again,  we can estimate the lower bound of $||F||$ on the unit sphere 
\begin{equation}\label{estimate on sphere}
\inf_{||z||=1} ||F(z)||
\geqslant
||v||-
\max_{||z||=1}\,||\tilde{F}(z)||
>
(1 - 
\frac{1}{2^{k-1}})
- \frac{1}{2^k}.
\end{equation}
Thus we can estimate~\eqref{high dimension order function 1} by
\begin{align}
\label{high dimension order function}
T_f(r)
=&\,
\int_{||z||=r}\log (\sum_{j=0}^m|F_j(z)|^2)^{\frac{1}{2}}\gamma(z)-\int_{||z||=1}\log (\sum_{j=0}^m|F_j(z)|^2)^{\frac{1}{2}}\gamma(z)\\
\nonumber
\leqslant&\, \log\phi(r)\cdot\int_{||z||=r} \gamma(z)-\int_{||z||=1}\log ||F(z)||\cdot\gamma(z)
\qquad
\text{[by~\eqref{estimate F}]}
\\
\nonumber
\leqslant&\, \frac{1}{2}\psi(r)\cdot \log r -  \log(1-\frac{1}{2^{k-1}}-\frac{1}{2^k})
\qquad
\text{[by \eqref{constant angular form integral}, \eqref{define phi(r) by psi}, \eqref{upper bound of F on unit sphere},  \eqref{estimate on sphere}]}.
\end{align}
Fix  $\epsilon_0\in (0, 1)$. Choose some large $M_1>0$ such that
\[
- \log\,(1-\frac{1}{2^{k-1}}-\frac{1}{2^k})\leqslant \log\,(1+\epsilon_0)\cdot\frac{\psi(1)}{2}
\qquad
{\scriptstyle
(\forall\,k\,\geqslant\, M_1).
}
\]
Plugging the above estimate into \eqref{high dimension order function}, we get
\begin{equation}
\label{estimate T_f part 1}
T_f(r) \leqslant \psi(r)\cdot \log r
\qquad
{\scriptstyle
(\forall\,r\,\geqslant\, 1\,+\,\epsilon_0).
}
\end{equation}

Next, we prove the remaining part of~\eqref{admissible-slow-growth cpn} for $1<r<1+\epsilon_0<2$ by an alternative method.

The pullback of Fubini-Study form $f^*\omega_{\mathsf{FS}}$ is 
\begin{align*}
\dif\dif^c\log(\sum_{i=0}^{m}||F_i(z)||^2)
=\frac{\sqrt{-1}}{2\pi}\big(    
\frac{\sum_{i,s,l}\frac{\partial F_i}{\partial z_s} dz_s\wedge \frac{\partial \overline{F_i}}{\partial \overline{z_l}}d\overline{z_l}}{||F(z)||^2}
-\frac{(\sum_{i,s}\overline{F_i}\frac{\partial F_i}{\partial z_s}dz_s)\wedge(\sum_{j,l}F_j\frac{\partial\overline{F_j}}{\partial\overline{z_l}}d\overline{z_l})}{||F(z)||^4}
\big).
\end{align*}
We now estimate $F$ and its derivatives on $\{||z||\leqslant 2\}$.

By the same reasoning as~\eqref{upper bound of F on unit sphere} and~\eqref{estimate on sphere}, we can obtain
\[
(1 - 
\frac{1}{2^{k-1}})
- \frac{\phi(2)}{2^k}
\leqslant
\inf_{||z||\leqslant 2} ||F(z)||
\leqslant
\sup_{||z||\leqslant 2} ||F(z)||
\leqslant
\phi(2).
\]
By Cauchy’s integral formula for derivatives, we have
\[
\frac{\partial F_i}{\partial z_s}(z)=\frac{\partial\tilde{F}_i}{\partial z_s}(z)=
\frac{1}{2\pi \sqrt{-1}}
\int_{|\zeta_k-z_k|=1}
\frac{\tilde{F}_i(z_0,\cdots,\zeta_s,\cdots,z_m)}{(z_s-\zeta_s)^2}d\zeta_s
\qquad
{\scriptstyle
(\forall\,||z||\,\leqslant\, 2).
}
\]
Hence by \eqref{estimate1} 
\[
\big|\frac{\partial F_i}{\partial z_s}(z)\big|
\leqslant
\max_{|\zeta_s-z_s|=1}\,|\tilde{F}_i(z_0,\cdots,\zeta_s,\cdots,z_m)|
\leqslant
\frac{\phi(2+1)}{2^k}
\qquad
{\scriptstyle
(\forall\,||z||\,\leqslant\, 2).
}
\]
Hence the $(n, n)$ form $\dif\dif^c\log(\sum_{i=0}^{m}|F_i(z)|^2)\wedge \alpha^{n-1}$ on the ball $\mathbb{B}(2)\subset \mathbb{C}^n$ is bounded from above by $O(1)\cdot \frac{1}{4^k}\cdot \alpha^n$ for some  $O(1)$ depending on $\phi$. Thus for any $r\in [1, 2]$ we have
\begin{align}
\nonumber
T_f(r)
=&
\int_{1}^r
\frac{\dif t}{t^{2n-1}}
\int_{\mathbb{B}(t)}
\dif\dif^c\log(\sum_{i=0}^{m} |F_i|^2)\wedge \alpha^{n-1}
\leqslant
\int_{1}^r
\frac{\dif t}{t^{2n-1}}
\int_{\mathbb{B}(t)}
O(1)\cdot \frac{1}{4^k}\cdot \alpha^{n}
\\
\label{estimate T_f part 2}
=&\, 
O(1) \cdot \frac{1}{4^{k}}
(r^2-1)\,
\leqslant\,
O(1)\cdot \frac{1}{4^k}\cdot
\log r
\qquad[\text{caution: two $O(1)$'s are different}].
\end{align}
Choose $M_2>0$ such that  $O(1)\cdot \frac{1}{4^{M_2}}\leqslant \psi(1)$. 
Setting $k\geqslant  \max\{M_1, M_2\}$, by~\eqref{estimate T_f part 1} and~\eqref{estimate T_f part 2},
we  conclude the proof of~\eqref{admissible-slow-growth cpn}.\qed

\begin{rmk}
When $n>m$, if one  considers universal meromorphic maps or correspondences  between $\mathbb{C}^n$ and
$\mathbb{CP}^m$ (cf. \cite[p.~50]{Noguchi-Winkelmann-book}), then
a likewise estimate~\eqref{admissible-slow-growth cpn} still holds true by much the same construction. However, the more essential problem is about minimal growth of universal holomorphic maps,  and we are interested in whether the shapes depend on $n$ or not.
\end{rmk}

\subsection{Proof of Theorem~C}
Given
countably many  directions $\{\theta_i\}_{i\geqslant 1}$ in the unit sphere $\mathbb{S}^{2n-1}\subset
\mathbb{C}^n$ and a continuous nondecreasing function $\psi: \mathbb{R}_{\geqslant 1}\rightarrow \mathbb{R}_+$
tending to infinity.
Define a continuous positive function
$\phi: \mathbb{R}_{\geqslant 0}\rightarrow \mathbb{R}_+$ by
\[
\phi(r)=
\begin{cases}
	1 \quad& (0\leqslant r <1),\\
r^{\frac{\psi(r)}{2}}\quad &(r\geqslant 1).
\end{cases}
\]
By Theorem~A, we receive  some universal holomorphic map $\tilde{F}:\mathbb{C}^n\rightarrow\mathbb{C}^m$ 
with slow growth
$
||\tilde{F}(\bullet)||\leqslant \phi(||\bullet||)
$,
which is
hypercyclic for translation operators along these directions $\{\theta_i\}_{i\geqslant 1}$.

It is clear that $F=\pi\circ\tilde{F}$ satisfies our desired universality and  hypercyclicity. Now we check  
the growth.  By Jensen's formula~\cite[Lemma 2.1.33]{Noguchi-Winkelmann-book}, for any $r>1$,
we have
$$
	T_F(r)
	=
	\int_{1}^r
	\frac{\dif t}{t^{2n-1}}
	\int_{\mathbb{B}(t)}
	\dif\dif^c||\tilde{F}||^2\wedge \alpha^{n-1}
	=
	\frac{1}{2}\int_{||z||=r}||\tilde{F}(z)||^2\,\gamma(z)
	-\frac{1}{2}\int_{||z||=1}||\tilde{F}(z)||^2\,\gamma(z).
$$
Noting that
$\int_{||z||=r} \gamma
=
1$ and that
$||\tilde{F}(z)||^2\leqslant \phi(||z||)^2
\leqslant r^{\psi(r)}$ for $||z||=r$,
we conclude the proof.
\qed

\begin{rmk}
	The slow growth rate~\eqref{desired growth in several variables tori} is optimal, since  $T_F(\bullet)$ must grow faster than any polynomial due to the universality of $F$.
\end{rmk}

\subsection{Proof of Theorem~D}

Consider the set of hypercyclic angles of $F$ 
\[\hat{I}:=\{[\theta]\in
\mathbb{R}/2\pi \cdot \mathbb{Z}
: F\text{ is hypercyclic for }\, \mathsf{T}_{e^{\sqrt{-1}\theta}}\}.
\]
The angular circle $\mathbb{R}/2\pi \cdot \mathbb{Z}$ has a natural metric induced from the Euclidean norm of $\mathbb{R}$, and our
goal is to show that,
for any $\alpha>0$ and  $\delta>0$, there holds 
\begin{equation}
\label{hausdorff measure = 0}
H_\delta^{\alpha}(\hat{I})=0,
\end{equation} 
where  we use the standard notation
\[
H_\delta^{\alpha}(\hat{I})
=
\inf\big\{
\sum_{i\geqslant 1}\,
(\text{diam}\,U_i)^{\alpha}\,\,
:\,\,
\cup_{i\geqslant 1}\, U_i\supset \hat{I},\, \text{diam}\,U_i<\delta \big\},
\] 
the infimum being taken over all countable covers $\{U_i\}_{i\geqslant 1}$ of $\hat{I}$ having diameters less than $\delta$.

Our strategy of proving~\eqref{hausdorff measure = 0} is by using the slow growth of the zeros-counting function of $F$ 
\begin{equation}
    \label{define n_f}
n_F(t, 0)
:=
\text{cardinality of }\,\{
z\in \mathbb{D}_t
\,:\,
F(z)=0
\}
\text{ counting multiplicities}
\quad 
\qquad
{\scriptstyle
(\forall\, t\,> \,0)}.
\end{equation}
The insight comes from the First Main Theorem in Nevanlinna theory.

Suppose $F\text{ is hypercyclic for }\, \mathsf{T}_{e^{\sqrt{-1}\theta}}$.
It is clear that $F$ cannot be any translation of the identity map $z\mapsto z$.
Hence for any  $\epsilon>0$,
there is some positive integer $r_{\epsilon}$ such that $\mathsf{T}_{e^{\sqrt{-1}\theta}}^{(r_{\epsilon})} (F)$ approximates the identity map on the unit disc
\[
\sup_{|z|\leqslant 1}|F(z+r_{\epsilon}\cdot e^{\sqrt{-1}\theta})-z|<\epsilon.
\]
If $\epsilon$ is sufficiently small, say $\epsilon<1$, then by 
Rouch\'e's theorem, $F$ has exactly one zero in the disc $\mathbb{D}(r_{\epsilon}\cdot e^{\sqrt{-1}\theta},1)$.
By taking an infinite shrinking sequence $\{\epsilon_i\}_{i\geqslant 1}\searrow 0$, we see  the corresponding $\{r_{\epsilon_i}\}_{i\geqslant 1}\nearrow +\infty$. Thus
$F$ must have infinitely many zeros within Euclidean distance  $1$ to the ray $\mathbb{R}_+\cdot e^{\sqrt{-1}\theta}$.

For a fixed universal holomorphic function $F$ in Theorem~C, set
\begin{equation}
\label{cover Ek}
E_k
:=
\{[\theta]\in
\mathbb{R}/2\pi \cdot \mathbb{Z}\,:\,
\exists\, r\in [2^k, 2^{k+1}) \text{ such that  }\,F\,\text{has zeros in}\,\mathbb{D}(r\cdot e^{\sqrt{-1}\theta}, 1)\}
\quad 
{\scriptstyle
	(\forall\, k\,\geqslant \,1)}.
\end{equation}
Then
$
\hat{I}\,
\subset \,
\cap_{\ell\geqslant 1}
\cup_{k\geqslant \ell}\, 
E_k.$

We are going to show that, if $\phi$ grows sufficiently slow, there must hold \begin{equation}
\label{sum is bounded}
\sum_{k\geqslant 1}\,
H^{\alpha}_{\delta}\,
(E_k)\,
<\,
+\infty,
\end{equation}
hence~\eqref{hausdorff measure = 0} follows immediately by the Borel-Cantelli lemma.

Now Nevanlinna theory plays a key r\^ole.
Recall  
Nevanlinna's inequality~\cite[Theorem 1.1.18]{Noguchi-Winkelmann-book} that the growth of $F$ ``controls'' the number of zeros of  $F$ (see~\eqref{define n_f}):
\begin{equation}
\label{T+O(1)}
T_F(r)+O(1)
\geqslant 
N_F(r, 0):=
\int_{t=1}^r\,
n_F(t,0)\,
\frac{\dif t}{t},
\qquad
{\scriptstyle
	(\forall\, r\,>\, 1),
}
\end{equation}
where we constantly abuse the notation $O(1)$ for uniformly bounded terms. For an auxiliary  positive increasing function $\psi: \mathbb{R}_{+}\rightarrow \mathbb{R}_{+}$ tending to infinity $\lim_{r\rightarrow +\infty}\psi(r)=+\infty$, we consider the transcendental growth rate 
\begin{equation}
\label{phi and psi}
\phi(r):=r^{\psi(r)}.
\end{equation}
Then, by Cartan's formula~\eqref{Cartan order function},
the growth condition~\eqref{thm 1.8 grow rate} implies that the left-hand side of~\eqref{T+O(1)}
\[
T_F(r)+O(1)
\leqslant
\psi(r)
\cdot
\log r
+
O(1)
\qquad
{\scriptstyle
(\forall\, r\,>\, 1).
}
\]
Combing the estimates
\[
N_F(r^2,0)\,
\leqslant\,
T_F(r^2)+O(1)\,
\leqslant \,
\psi(r^2)\cdot
\log(r^2)
+
O(1)
\qquad
{\scriptstyle
(\forall\, r\,>\, 1)
}
\]
and
\[
N_F(r^2,0)\,
\geqslant\,
\int_r^{r^2}n_F(r,0)\frac{\dif t}{t}\,
=
\, 
n_F(r,0)\cdot \log r
\qquad
{\scriptstyle
(\forall\, r\,>\, 1)
},
\]
we receive that
\begin{equation}
\label{trick in integral}
n_F(r,0)\,
\leqslant\,
2\cdot
\psi(r^2)
+
o(1)
\qquad
{\scriptstyle
	(\forall\, r\,>\, 1)
}.
\end{equation}
Now we choose $\psi$ in~\eqref{phi and psi} to grow extremely slowly, 
say
\begin{equation}
\label{slow phi}
\psi((2^{k+2})^2)
\leqslant
k
\qquad
{\scriptstyle
	(\forall\, k\,\geqslant\, 1)}.
\end{equation}
Whence
the number of zeros of
$F$  in each annulus 
$
\mathscr{A}_k:=\{
2^k-1
< |z|<2^{k+1}+1\}
$
(for $k\geqslant 1$)
is at most~(see~\eqref{trick in integral}, ~\eqref{slow phi})
$$
n_F(2^{k+1}+1,0)
-
n_F(2^{k}-1,0)
\leqslant
n_F(2^{k+2},0)\,
\leqslant\,
2\cdot
\psi((2^{k+2})^2)
+o(1)
\leqslant 2k+o(1).$$
Hence by~\eqref{cover Ek} and by  plane geometry, $E_k$ can be covered by intervals of the shape 
$
[\theta_{j}-\tfrac{\pi}{r_j}, \theta_{j}+\tfrac{\pi}{r_j}]$
(mod $2\pi\cdot \mathbb{Z}$),
where  
$
r_j\cdot e^{\sqrt{-1}\theta_j}\text{ is a zero of $F$ in } \mathscr{A}_k$. 
Thus
\[
H_\delta^{\alpha}\,
(E_k)\,
\leqslant\,
\big(2k+o(1)\big)
\cdot
H_\delta^{\alpha}(
[-\tfrac{\pi}{2^k-1}, \tfrac{\pi}{2^k-1}]
)
\leqslant\,
\big(2k+o(1)\big)
\cdot
(\tfrac{2\pi}{2^{k}-1})^{\alpha}
\qquad
{\scriptstyle
(\forall\, k\,\gg\, 1,\,\text{s.t.}\,\tfrac{2\pi}{2^{k}-1}\,<\,\delta).
}
\]
Therefore
\[
\sum_{k\geqslant 1}\,
H^{\alpha}_{\delta}(E_{k})<+\infty
\] 
is guaranteed for any $\alpha>0$.

Lastly, we use Lemma~\ref{comparison-lemma} to replace $\phi$ in~\eqref{phi and psi} by an inferior real analytic function of the shape~\eqref{admissible analytic function}. This concludes the proof.
\qed

\subsection{Proof of Theorem~E}
The insight is that certain Cantor set in the interval $[0, 1]$ has zero Lebesgue measure but is uncountable.
We now modify our algorithm  to construct some direction set $I$ likewise.

Notice that each positive integer $n$ has a finite length $l(n)$ in binary expansion
$$
n=\sum_{k=0}^{l(n)}a_k\, 2^k,\quad a_0,a_1,a_2,...,a_{l(n)}\in\{0,1\},
\quad a_{l(n)}=1.
$$
We  reset the sequence $\{(g_j, \hat{r}_j)\}_{j\geqslant 1}$ in~\eqref{set g_j} as
$
g_j:=f_{\varphi_1(l(j))}$,
$
\hat{r}_j
:=
r_{\varphi_1(l(j))}.
$
Thus each $(f_i, r_i)$ for $i=1, 2, 3, \dots$ still repeats infinitely many times in $(g_j, \hat{r}_j)_{j\geqslant 1}$, while
\begin{equation}\label{new pair}
g_{2^k}
=g_{2^k+1}
=\cdots=g_{2^{k+1}-1}=f_{\varphi_1(l(2^k))},
\quad
\hat{r}_{2^k}=\hat{r}_{2^k+1}=\cdots=\hat{r}_{2^{k+1}-1}
\qquad
{\scriptstyle
	(\forall\, k\,\geqslant \,0).
}
\end{equation}

The second place we adjust is the choice of $c_k=r_k\cdot e^{\sqrt{-1}\theta_k}$ in polar coordinates $r_k\in\mathbb{Z}_+$, $\theta_k\in \mathbb{R}/{2\pi\cdot \mathbb{Z}}$. Since each $G_k$ obtained by our Algorithm is continuous and satisfies~\eqref{condition 2}, we can retain the modulus $r_k$ and slightly perturb the argument $\theta_k$ of $c_k$ within a small angle $0<\delta_k\ll 1$ so that
a similar estimate 
\begin{equation}\label{new estimate}
|G_k(r_k\cdot e^{\sqrt{-1}\eta}+z)-g_k(z)|\leqslant 2\epsilon_k
\qquad
\qquad
{\scriptstyle
	(\forall\,z\,\in\, \mathbb{D}_{\hat{r}_k};
	\,\,\forall\,\eta\,\in\,(\theta_k-\delta_k,\,\theta_k+\delta_k)
	\,\text{mod}\, 2\pi\cdot \mathbb{Z})}
\end{equation}
still holds true. By our Algorithm, we can subsequently choose $r_k\in \mathbb{R}_+$, $\theta_k\in \mathbb{R}/{2\pi\cdot \mathbb{Z}}$ and $0<\delta_k\ll 1$ for $k=1, 2, 3, \dots$ with the additional requirements that
$(\theta_{2k}-\delta_{2k},\theta_{2k}+\delta_{2k})$ and $(\theta_{2k+1}-\delta_{2k+1},\theta_{2k+1}+\delta_{2k+1})$ are disjoint subsets of $(\theta_k-\delta_k,\theta_k+\delta_k)$
and that
$1\gg \delta_1 \gg
\delta_2 \gg
\delta_3 \gg
\cdots.$

Gathering all possible angles
\[
E_k=\bigcup_{i=2^k}^{2^{k+1}-1}(\theta_i-\delta_i,\theta_i+\delta_i)
\qquad
{\scriptstyle
(\forall\, k\,\geqslant \,0)}
\]
obtained in adjacent steps,
we see a  fast decreasing pattern
$
E_0\supset E_1\supset E_2\supset\cdots.
$
Thus the Cantor-like set $\cap_{k=0}^{+\infty}\,E_k$ is  uncountable.

On the other hand, by~\eqref{new pair},~\eqref{new estimate} and by the arguments in Subsection~\ref{One variable case}, we can show that all arguments $\theta\in \cap_{k=0}^{+\infty}\,E_k$ produce $a=e^{\sqrt{-1}\cdot \theta}$ such that that $F$ is hypercyclic for $\mathsf{T}_a$.

It suffices to check that for any pair $(f_i,r_i)$ in Subsection~\ref{One variable case} and any $\epsilon>0$, there is some large integer $n\gg 1$ such that
\begin{equation}\label{approximate uncountable}
	||F(z+n\cdot \theta)-f_i(z)|| \leqslant \epsilon
\end{equation}
uniformly for $z\in\mathbb{B}(r_i)$.
Using the sequence $\{\epsilon_{\ell}\}_{\ell\geqslant 1}$ defined in~\eqref{sum of epsilons is almost 0}, we can find a large integer $M$ such that $\sum_{\ell\geqslant M}\epsilon_\ell < \epsilon$. Then, recalling~\eqref{new pair} and~\eqref{trick n_k}, we select a step number $k>M$ such that
$
\varphi_1\circ l(k)=i$ and $\theta\in (\theta_k-\delta_k,\theta_k+\delta_k)$.
It is easy to check that $n=n_k$ satisfies the proximity requirement~\eqref{approximate uncountable} by chasing the algorithm and by noticing~\eqref{new estimate}.

Lastly, by  Lemma~\ref{all-in hypercyclic lemma}, we conclude the proof.
\qed

\section{\bf Reflections}
\label{section: reflection}
Constructing interesting holomorphic objects is one central theme in complex geometry. 
Let us recall the  modern Oka principle~\cite[p.~369]{MR4547869}:

\smallskip
{\em
Analytic problems on Stein manifolds which can be formulated in terms of maps to Oka
manifolds, or liftings with respect to Oka maps, have only topological obstructions.}

\smallskip
In light of Nevanlinna theory
we shall test the above slogan by seeking
 interesting   holomorphic maps into Oka manifolds $Y$ with
 slow growth. See e.g.~\cite{MR3078344}  in this line of thought.
In particular, we are interested in minimal growths of
universal holomorphic maps from various source spaces into $Y$.
In general, such questions are very difficult, because  certain effective version of Runge's approximation theorem ought to be required.
For instance, from $\mathbb{C}^n$ to $\mathbb{C}^m$, it is our Algorithm.
Nevertheless, we would like to ask  

\begin{ques}
Is it true that for any compact Oka manifold $Y$, there exist some universal entire curves
$f: \mathbb{C}\rightarrow Y$ having slow growth no more than the shape~\eqref{desired growth in several variables tori}? 
\end{ques}

\begin{ques}
If the answer to the above question is yes, 
is the same statement holds true for any  compact Oka-$1$ manifold? 
\end{ques}

Recall Kusakabe's result~\cite[Theorem 1.4]{Kusababe-first-paper} that an Oka manifold $Y$ admits universal holomorphic maps $f: X\rightarrow Y$ from various source spaces $X$ with mild symmetry.

\begin{ques}
What can we say about  minimal growth of  $f$ from other source space $X\neq \mathbb{C}$?
Here if $Y$ is not compact, 
we shall find
 appropriate metrics on $Y$  for defining Nevanlinna characteristic functions $T_f$.
\end{ques}

The above  questions seek quantitative invariants of Oka (resp. Oka-$1$) manifolds. They
could also provide new insight for certain classification problems, e.g., whether any K3 surface or Fano manifold is Oka or not. 
Moreover, the techniques developed along this line shall shed light for  constructing  entire and rational  curves on Fano manifolds {\em via analytic methods}, which has been a long standing challenge  in complex geometry.

\bigskip
\begin{center}
\bibliographystyle{alpha}
\bibliography{article}
\end{center}

\end{document}